\newcommand\blfootnote[1]{%
  \begingroup
  \renewcommand\thefootnote{}\footnote{#1}%
  \addtocounter{footnote}{-1}%
  \endgroup
}
\DeclareMathOperator*{\argmin}{arg\rm{}min}
\newcommand{\bs}{\mathbf{s}}
\newcommand{\bg}{\mathbf{g}}
\newcommand{\bhs}{\mathbf{\hat{s}}}
\newcommand{\bx}{\mathbf{x}}
\newcommand{\by}{\mathbf{y}}
\newcommand{\bA}{\mathbf{A}}
\newcommand{\bC}{\mathbf{C}}
\newcommand{\bS}{\mathbf{S}}
\newcommand{\bP}{\mathbf{P}}
\newcommand{\bU}{\mathbf{U}}
\newcommand{\bV}{\mathbf{V}}
\newcommand{\bW}{\mathbf{W}}
\newcommand{\bX}{\mathbf{X}}
\newcommand{\bY}{\mathbf{Y}}
\newcommand{\bPsi}{\boldsymbol{\Psi}}
\newcommand{\bphi}{\boldsymbol{\phi}}
\newcommand{\bPhi}{\boldsymbol{\Phi}}
\newcommand{\bSigma}{\boldsymbol{\Sigma}}
\newcommand{\bLambda}{\boldsymbol{\Lambda}}
\definecolor{blue}{rgb}{0,0,1}
\definecolor{darkgreen}{rgb}{0,0.5,0}
\definecolor{red}{rgb}{1,0,0}
\definecolor{teal}{rgb}{0,0.5,0.7}
\newtheorem{definition}{Definition}
\newtheorem{remark}{Remark}
\newtheorem{theorem}{Theorem}
\newtheorem{algorithm}{Algorithm}
\newtheorem{lemma}{Lemma}
\newtheorem{corollary}{Corollary}
\newtheorem{assumption}{Assumption}
\title{\huge{Compressive sampling and dynamic mode decomposition}}
\author{Steven L. Brunton$^{1*}$, Joshua L. Proctor$^2$, J. Nathan Kutz$^1$\\
\small{$^1$ Department of Applied Mathematics, University of Washington, Seattle, WA 98195, United States}\\
\small{$^2$Institute for Disease Modeling, Intellectual Ventures Laboratory, Bellevue, WA 98004, United States}\\ ~ \\
}
\date{}
\begin{document}
\maketitle
\blfootnote{$^*$ Corresponding author. Tel.: +1 609 921 6415.\\ {\indent\emph{E-mail address:} sbrunton@uw.edu (S.L. Brunton).}}
\vspace{-.4in}
\begin{abstract}
This work develops compressive sampling strategies for computing the dynamic mode decomposition (DMD) from heavily subsampled or output-projected data.  
The resulting DMD eigenvalues are equal to DMD eigenvalues from the full-state data.  
It is then possible to reconstruct full-state DMD eigenvectors using $\ell_1$-minimization or greedy algorithms.  
If full-state snapshots are available, it may be computationally beneficial to compress the data, compute a compressed DMD, and then reconstruct full-state modes by applying the projected DMD transforms to full-state snapshots.  

These results rely on a number of theoretical advances.  First, we establish connections between the full-state and projected DMD.  Next, we demonstrate the invariance of the DMD algorithm to left and right unitary transformations.  
When data and modes are sparse in some transform basis, we show a similar invariance of DMD to measurement matrices that satisfy the so-called restricted isometry principle from compressive sampling.  
We demonstrate the success of this architecture on two model systems.  
In the first example, we construct a spatial signal from a sparse vector of Fourier coefficients with a linear dynamical system driving the coefficients.  
In the second example, we consider the double gyre flow field, which is a model for chaotic mixing in the ocean.\\

\noindent\emph{Keywords--}
Compressive sampling, 
Compressed sensing,
Dynamic mode decomposition, 
Dynamical systems,
Unitary transformations.
\end{abstract}

\section{Introduction}
Dynamic mode decomposition (DMD) is a powerful new technique introduced in the fluid dynamics community to isolate spatially coherent modes that oscillate at a fixed frequency~\cite{Rowley:2009,schmid:2010}. 
DMD differs from other dimensionality reduction techniques such as the proper orthogonal decomposition (POD)~\cite{Lumley:1970,Berkooz:1993,HLBR_turb,noack:03cyl}, where modes are selected to minimize the $\ell_2$-projection error of the data onto modes. 
DMD not only provides modes, but also a linear model for how the modes evolve in time.  

The DMD is a data-driven and equation-free method that applies equally well to data from experiments or simulations.  An underlying principle is that even if the data is high-dimensional, it may be described by a low-dimensional attractor subspace defined by a few coherent structures.  When the data is generated by a nonlinear dynamical system, then the DMD modes are closely related to eigenvectors of the infinite-dimensional Koopman operator~\cite{Koopman:1931,Rowley:2009,Mezic:2013}.  In Ref.~\cite{Tu:2014a}, DMD has been shown to be equivalent to linear inverse modeling (LIM)~\cite{penlandMWR89,penlandJClimate93} from climate science, under certain conditions, and it also has deep connections to the eigensystem realization algorithm (ERA)~\cite{kalman:1965,ERA:1985,ERA:2009}.
  
 DMD has been used to study various fluid experiments~\cite{Schmid:2011,Schmid:2012}, shock turbulent boundary layer interaction~\cite{Grilli:2012}, the cylinder wake~\cite{Bagheri:2013}, and foreground/background separation in videos~\cite{Grosek:2013}.  In the context of fluid dynamics, DMD typically relies on time-resolved, full-state measurements of a high-dimensional fluid vector field.  For complex, turbulent flows, it may be prohibitive to collect data across all spatial and temporal scales required for this analysis.  

The present work leverages tools from compressive sampling~\cite{Donoho:2006,Candes:2006a,Candes:2006b,Candes:2006c,Baraniuk:2007,Baraniuk:2009} to facilitate the collection of considerably fewer measurements, resulting in the same dynamic mode decomposition, as illustrated in Figure~\ref{fig:schematic}.  This reduction in the number of measurements may have a broad impact in situations where data acquisition is expensive and/or prohibitive.  In particular, we envision these tools being used in particle image velocimetry (PIV) to reduce the data transfer requirements for each snapshot in time, increasing the maximum temporal sampling rate.  Other applications include ocean and atmospheric monitoring, where individual sensors are expensive.  Even if full-state measurements are available, the proposed method of compressed DMD will be computationally advantageous in many situations where there is low-rank structure in the high-dimensional data.

\subsection{Previous work on sparsity in dynamics}
There are a few examples of prior work utilizing sparsity for the dynamic mode decomposition.  In~\cite{Javanovic:2012}, a sparsity-promoting variant of the dynamic mode decomposition was introduced whereby an $\ell_1$-penalty term on the number of DMD modes balanced the tradeoff between the number of modes and the quality of the DMD representation.  Other algorithms have been developed to obtain only a fixed number of modes, but these have involved global minimization techniques that may not scale with large problems~\cite{Chen:2012}.

In~\cite{Tu:2014b} and~\cite{Glauser:2013}, compressive sampling has been used to design non-time resolved sampling strategies for particle image velocimetry (PIV) of a fluid velocity field; in \cite{Tu:2014b}, this sampling is specifically used to compute DMD.  These experimental methods are based on the fact that temporally sparse signals may be sampled considerably less often than suggested by the Shannon-Nyquist sampling frequency~\cite{Nyquist:1928,Shannon:1948}.  

In~\cite{Shi:2014}, compressive sampling is paired with the theory of linear dynamical systems to obtain higher temporal sampling resolution and accurate reconstruction of video MRI.  Their work is based on prior studies relating compressive sampling, linear dynamical systems, and video MRI~\cite{Sankaranarayanan:2012,Patel:2013}.  Incoherent measurements are used to estimate an underlying snapshot matrix of hidden-Markov states, as well as an embedding from this low-dimensional attractor into the high-dimensional image pixel space.  For the first part, they use system identification based on Hankel matrices and minimal realization theory~\cite{kalman:1965}.  Based on the heavy use of Hankel matrices in ERA, and the established connections between DMD and ERA, it will be interesting to see how the present work connects to~\cite{Shi:2014} in the future.

\subsection{Contribution of this work}
This work deviates from the prior studies combining compressive sampling and DMD, in that we utilize sparsity of the \emph{spatial} coherent structures to reconstruct full-state DMD modes from few measurements.  This method results in full-state DMD modes from spatially projected or subsampled measurements using compressive sampling.  The eigenvalues of the projected DMD are equal to the full-state DMD eigenvalues, so that we obtain the same low-dimensional model to advance mode coefficients.  

These results highlight the ability to perform DMD with significantly less data acquisition when the data and modes are sparse in some basis.  If full-state snapshots are available, it is possible to pre-compress the data, compute a projected DMD, and then reconstruct full-state DMD modes as a linear combination of the original full-state data.  The compressed DMD provides significant computational savings over traditional DMD.  These methods are described in Section~\ref{sec:pathways} as various paths we can take in Figure~\ref{fig:schematic}, depending on initial data.

Our results rely on a number of theoretical advances that may be useful more broadly.  First, we establish connections between the full-state and projected DMD.  We then show that DMD is invariant to left and right unitary transformations of the data.  This implies that the DMD computed in the spatial domain, Fourier domain, or in a POD coordinate system will all be closely related, since these coordinate systems are related by unitary transformations.  We then show that when data and modes are sparse in some basis, we obtain a similar invariance of the DMD when our measurement matrix and sparse basis satisfy the restricted isometry principle.   

The methods in this paper are illustrated on two examples that are relevant to fluid mechanics.  However, we believe that there is broad applicability of compressive sampling in dynamical systems more generally.

\section{Background}\label{sec:background}
Dynamic mode decomposition is a method of modal extraction from full-state snapshot data that results in spatial-temporal coherent structures oscillating with a fixed frequency and damping rate.  This theory has recently been generalized and extended to a larger class of datasets~\cite{Tu:2014a}, and it is discussed in Sec.~\ref{ss:DMD}.

The present work is centered around the use of compressive sampling to compute the dynamic mode decomposition from very few spatial measurements.  In compressive sampling, a high-dimensional signal may be reconstructed from few measurements as long as the signal is sparse in some transform basis.  
We discuss compressive sampling in Sec.~\ref{ss:CS}.  

\subsection{Dynamic mode decomposition (DMD)}
\label{ss:DMD}
The dynamic mode decomposition (DMD) is a new tool in dynamical systems that has been introduced in the fluid dynamics community~\cite{Rowley:2009,schmid:2010}.  The DMD provides the eigenvalues and eigenvectors of the best-fit linear system relating a snapshot matrix and a time-shifted version of the snapshot matrix at some later time.      

Consider the following data snapshot matrices:
\begin{align*}
\bX = \begin{bmatrix} \vline & \vline & & \vline \\
	\bx_0 & \bx_1 & \hspace{-.05in}\cdots  \hspace{-.05in} & \bx_{m-1}\\
	\vline & \vline &  & \vline\end{bmatrix},
\bX'= \begin{bmatrix} \vline & \vline & & \vline \\
	\bx_1 & \bx_2 & \hspace{-.05in} \cdots  \hspace{-.05in}& \bx_{m}\\
	\vline & \vline &  & \vline\end{bmatrix}.
\end{align*}
Here, $\bx_k\in\mathbb{R}^n$ is the $k^{\text{th}}$ snapshot, and typically $n\gg m$.  $\bx$ is often the state of a high-dimensional dynamical system, such as a fluid flow.  We also consider the snapshots to be spaced evenly in time, and we may also imagine that these measurements are discrete-time samples of a continuous-time signal, so that $\bx_k=\bx(k\Delta t)$.  

The dynamic mode decomposition involves the decomposition of the best-fit linear operator $\bA$ relating the matrices above:
\begin{align}
\bX'=\bA\bX.
\label{eq:dmd}
\end{align}
When ambiguous, we may refer to $\bA$ in Eq.~(\ref{eq:dmd}) as $\bA_\bX$.

The exact DMD algorithm proceeds as follows:
\begin{algorithm}
The method of \emph{\underline{exact DMD}} was recently defined~\cite{Tu:2014a}, and it is given by the following procedure:
\begin{enumerate}
\item Collect data $\bX, \bX'$ and compute the singular value decomposition (SVD) of $\bX$:
\begin{align}
\bX = \bU\bSigma\bV^*.
\label{eq:dmdalg1}
\end{align}
\item Compute the least-squares fit $\bA$ that satisfies ${\bX'=\bA\bX}$ and project onto POD/PCA modes $\bU$:
\begin{align}
\tilde{\bA} = \bU^*\bA\bU = \bU^*\bX'\bV\bSigma^{-1}.
\label{eq:dmdalg2}
\end{align}
\item Compute the eigen-decomposition of $\tilde{\bA}$:
\begin{align}
\tilde{\bA}\bW = \bW\bLambda.
\label{eq:dmdalg3}
\end{align}
$\bLambda$ are the DMD eigenvalues.
\item Compute the DMD modes $\bPhi$:
\begin{align}
\bPhi = \bX'\bV\bSigma^{-1}\bW.
\label{eq:dmdalg4}
\end{align}
\end{enumerate}
\end{algorithm}

\begin{remark}
The first three steps in the algorithm above are identical to those in~\cite{schmid:2010}.  However, the last step differs in the computation of DMD modes.  In~\cite{schmid:2010}, the modes are given by $\bPhi = \bU\bW$.  In~\cite{Tu:2014a}, this formula $\bPhi=\bU\bW$ is still used to compute modes corresponding to zero eigenvalues.
\end{remark} 

The data $\bX,\bX'$ may come from a nonlinear system
\begin{align*}
\bx_{k+1} = \mathbf{f}(\bx_k),
\end{align*}
in which case the DMD modes are related to eigenvectors of the infinite-dimensional Koopman operator $\mathcal{K}$ which acts as the pull-back operator on observable functions~\cite{Koopman:1931,MarsdenMTAA,Rowley:2009,Mezic:2013}.  In particular, $\mathcal{K}$ acts on observable functions $\bg$ as:
\begin{align*}
\mathcal{K}\bg(\bx_k)=\bg(\bf{f}(\bx_k))=\bg(\bx_{k+1}).
\end{align*}

The connection between DMD and the Koopman operator justify the application of this method in a variety of contexts.  We may interpret DMD as a model reduction technique if data is acquired from a high-dimensional model, or a method of system identification if the data comes from measurements of an uncharacterized system.  In the latter case, the resulting DMD model is data-driven and may be used in conjunction with equation-free methods~\cite{Kevrekidis:2003}.  The hierarchy of structure in the data is illustrated in Figure~\ref{fig:data}.  Recently, the assumption of evenly spaced snapshots was relaxed, so that the columns of $\bX$ may be sampled at any times, as long as the columns of $\bX'$ are sampled a fixed $\Delta t$ later~\cite{Tu:2014a}.

\begin{figure}
\begin{center}
\begin{overpic}[width=0.31\textwidth]{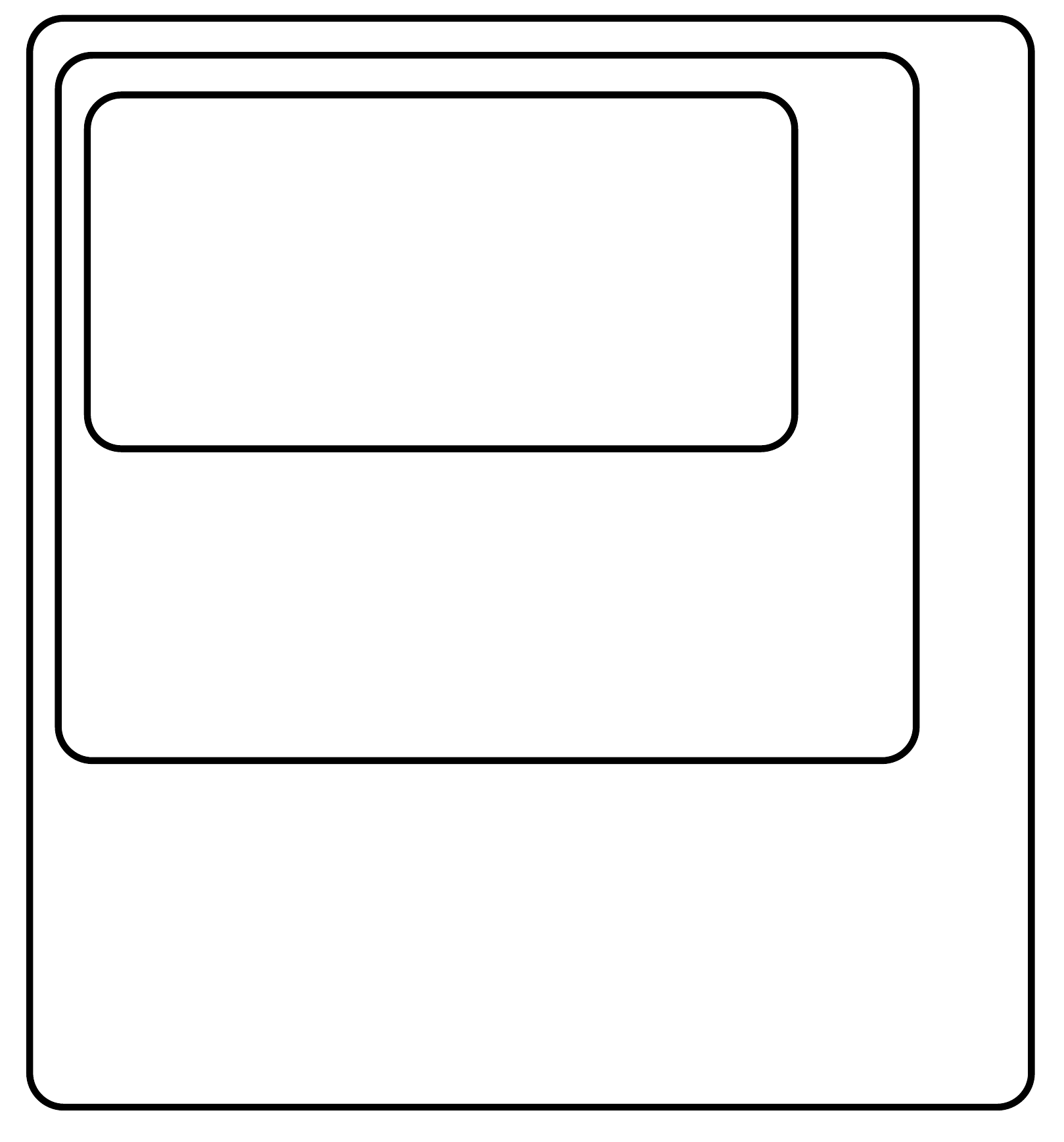}
\put(5.25,5.75){\textbf{Data}}
\put(8,36){\textbf{Nonlinear dynamics}}
\put(10,64){\textbf{Linear dynamics}}
\put(18,76){$\bx_{k+1} = \bA \bx_k$}
\put(18,47){$\bx_{k+1}=\bf{f}(\bx_k)$}
\put(14,16){$\bX = \begin{bmatrix} \vline & \vline & & \vline \\
	\bx_0 & \bx_1 & \cdots & \bx_{m-1}\\
	\vline & \vline &  & \vline\end{bmatrix}$}
\end{overpic}
\caption{Schematic of various assumptions of dynamic structure underlying data $\bX,\bX'$.}\label{fig:data}
\vspace{-.2in}
\end{center}
\end{figure}

\subsection{Compressive sampling}
\label{ss:CS}
Consider a signal $\bx\in\mathbb{R}^n$, which is sparse in some basis given by the columns of $\bPsi$, so that
\begin{align}
\bx = \bPsi\bs,
\label{eq:sparse}
\end{align}
and $\bs$ is a vector containing mostly zeros.  The signal $\bx$ is $K$-sparse if $\bs$ has exactly $K$ nonzero elements.  

Most natural signals are sparse in some basis.  For example, natural images and audio signals are sparse in Fourier or wavelet bases, resulting in a high-degree of \emph{compressibility}.  If we take the Fourier or Wavelet transform of an image, most of the coefficients will be small and can be neglected without resulting in much loss of image quality.  Truncating in Fourier or Wavelet bases is the foundation of JPEG-2000 image compression and MP3 audio compression.  Similarly, many high-dimensional nonlinear PDEs have sparse solutions~\cite{Schaeffer:2013}.

The theory of compressive sampling~\cite{Donoho:2006,Candes:2006a,Candes:2006b,Candes:2006c,Baraniuk:2007,Baraniuk:2009} suggests that instead of measuring the high-dimensional signal $\bx$ and then compressing, it is possible to measure a low-dimensional subsample or random projection of the data and then directly solve for the few non-zero coefficients in the transform basis.  
Consider the measurements $\by\in\mathbb{R}^p$, with $K< p\ll n$:
\begin{align*}
\by = \bC\bx.
\end{align*}
The measurement matrix $\bC$ is often denoted by $\bPhi$ in the compressive sampling literature.  However, $\bPhi$ is already used in the DMD community for DMD modes in Eq.~(\ref{eq:dmdalg4}).  

If $\bx$ is sparse in $\bPsi$, then we would like to solve the underdetermined system of equations
\begin{align}
\by=\bC\bPsi\bs \label{eq:underdet}
\end{align}
for $\bs$ and then reconstruct $\bx$.  Since there are infinitely many solutions to this system of equations, we seek the sparsest solution $\bhs$,
\begin{align}
\bhs=\argmin_{\bs'}\|\bs'\|_0,\text{ such that }\by=\bC\bPsi\bs'.\label{eq:l0min}
\end{align}
However, this amounts to a brute force combinatorial search, which is infeasible for even moderately large problems.  Under certain conditions on the measurement matrix $\bC$, Eq.~(\ref{eq:l0min}) may be relaxed to a convex $\ell_1$-minimization~\cite{Candes:2006c,Donoho:2006}:
\begin{align}
\bhs=\argmin_{\bs'}\|\bs'\|_1,\text{ such that }\by=\bC\bPsi\bs'.\label{eq:l1min}
\end{align}
Specifically, the measurement matrix $\bC$ must be \emph{incoherent} with respect to the sparse basis $\bPsi$, so that rows of $\bC$ are uncorrelated with columns of $\bPsi$.  In this case, the matrix $\bC\bPsi$ satisfies the \emph{restricted isometry principle} (RIP) for sparse vectors $\bs$,
\begin{align*}
(1-\delta_K)\|\bs\|_2^2\leq \|\bC\bPsi \bs\|_2^2\leq (1+\delta_K)\| \bs \|_2^2,
\end{align*}
with restricted isometry constant $\delta_K$.  $\bC\bPsi$ acts as a near isometry on $K$-sparse vectors $\bs$.  The RIP will be an important part of the following analysis combining sparsity and dynamic mode decomposition.  In addition to taking incoherent measurements, we must take on the order of $K\log(n/K)$ measurements to accurately determine the $K$ nonzero elements of the $n$-length vector $\bs$~\cite{Candes:2006,Candes:2006a,Baraniuk:2007}.

Typically a generic basis such as Fourier or wavelets is used to represent the sparse signal $\bs$.  The Fourier transform basis is particularly attractive for engineering purposes since single-pixel measurements are incoherent, exciting broadband frequency content.  If a signal is $K$-sparse in the Fourier domain, we may then reconstruct the full state from $\mathcal{O}(K\log(n/K))$ single-pixel measurements at random spatial locations.  Random pixel sampling is especially beneficial when individual measurements are expensive, for example in ocean and atmospheric sampling, among other applications.  

Another major result of compressive sampling is that Bernouli and Gaussian random measurement matrices $\bC$ will satisfy the RIP with high probability for a generic basis $\bPsi$~\cite{Candes:2006b}.  There is also work describing incoherence with sparse matrices and generalizations to the RIP~\cite{Gilbert:2010}.  
Recent work has shown the advantage of pairing compressive sampling with a data-driven POD/PCA basis, in which the data is optimally sparse~\cite{Kutz:2013,Bright:2013,Glauser:2013,Brunton:2014a}.  The use of a POD/PCA basis results in a more computationally efficient signal reconstruction from fewer measurements.  

In addition to the $\ell_1$ minimization described in Eq.~(\ref{eq:l1min}), there are a host of \emph{greedy algorithms}~\cite{Tropp:2004,Tropp:2010,Needell:2010,Gilbert:2012} that iteratively determine the sparse solution to the underdetermined system in Eq.~(\ref{eq:underdet}).  There has also been significant work on compressed SVD and PCA based on the Johnson-Lindenstrauss (JL) lemma~\cite{JL:1984,Fowler:2009,Qi:2012,Gilbert:2012}.  The JL lemma is closely related to the RIP, and it states when it is possible to embed a set of high-dimensional vectors in a low-dimensional space while preserving the spectral properties.

\section{Compressive DMD}\label{sec:methods}
In this section, we combine ideas from compressive sampling to compute the dynamic mode decomposition from a few spatially incoherent measurements.  In Sec.~\ref{sec:csdmd:1}, we establish basic connections between the DMD on full-state and projected data.  These connections facilitate the two main applied results of this work:
\begin{itemize}
\item [1)] It is possible to compute DMD on projected data and reconstruct full-state DMD modes through compressive sampling.
\item[2)] If full-state measurements are available, it is advantageous to compress the data, compute the projected DMD, and then compute full-state DMD modes by linearly combining snapshots according to the projected DMD transforms.  
\end{itemize}
In both cases, the full-state and projected DMD eigenvalues are equal.  These two approaches are described in Section~\ref{sec:pathways} as various pathways to take in Figure~\ref{fig:schematic}, depending on what the initial data is.

Next, in Sec.~\ref{sec:csdmd:2}, we demonstrate the invariance of the DMD algorithm to left or right unitary transformations of the data.  We then discuss how the condition of unitarity may be relaxed to a transformation satisfying a restricted isometry principle, as long as the data is sparse in a basis that is incoherent with respect to the measurements.  This strengthens the connection to compressive sampling.  Full-state DMD modes are then reconstructed from projected DMD modes using compressive sampling; in particular, we use the method of compressive sampling matching pursuit (CoSaMP)~\cite{Needell:2010}.

\subsection{Projected DMD eigenvalues and eigenvectors}\label{sec:csdmd:1}

It is possible to either collect data $\bX,\bX'$, or output-projected data $\bY,\bY'$, where $\bY=\bC\bX$, ${\bY'=\bC\bX'}$ and $\bC\in\mathbb{R}^{p\times m}$ is the measurement matrix.

\begin{definition}
We refer to $\bX$ and $\bX'$ as the \emph{full-state snapshot matrices} and $\bY$ and $\bY'$ as the \emph{output-projected snapshot matrices}.
\end{definition}

Similar to Eq.~(\ref{eq:dmd}) above, $\bY$ and $\bY'$ are related by
\begin{align}
\bY' = \bA_\bY \bY.
\label{eq:dmd2}
\end{align}

We may also rely on the following three assumptions of sparsity of data and incoherence of measurements.

\begin{assumption}\label{ass1}
The columns of $\bX$ and $\bX'$ are sparse in a transform basis $\bPsi$ so that $\bX = \bPsi\bS$ and $\bX' = \bPsi\bS'$, where the columns of $\bS,\bS'$ are sparse (mostly zeros).
\end{assumption}

\begin{assumption}\label{ass2}
The measurement matrix $\bC$ is \emph{incoherent} with respect to $\bPsi$ so that a restricted isometry principle is satisfied.
\end{assumption}

\begin{remark}
Under Assumptions \ref{ass1} and \ref{ass2}, it is possible to reconstruct $\bx$ from $\by=\bC\bx$, and therefore $\bX,\bX'$ from $\bY,\bY'$ using compressive sampling.  However, this is laborious and inelegant.
\end{remark}

\begin{assumption}\label{ass3}
In addition to sparsity of the columns of $\bX$ and $\bX'$, we may also require each of the columns to be in the same sparse subspace of the basis $\bPsi$.  The POD modes $\bU_\bX$ and the DMD modes $\bPhi_\bX$ are then guaranteed to be in this sparse subspace.  This condition is reasonable when there is a low-dimensional underlying dynamical system in the sparse basis that strongly damps all but a small subspace.
\end{assumption}

\begin{lemma}
The full-state and projected DMD matrices $\bA_\bX$ from Eq.~(\ref{eq:dmd}) and $\bA_\bY$ from Eq.~(\ref{eq:dmd2}) are related as:
\begin{align}
\bC \bA_\bX = \bA_\bY \bC.
\end{align}
\label{lemma:dmd}
\end{lemma}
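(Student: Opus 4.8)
My plan is to prove the intertwining relation $\bC\bA_\bX=\bA_\bY\bC$ by combining the defining least-squares relations for the two DMD operators with the incoherent-sparsity hypotheses (Assumptions~\ref{ass1}--\ref{ass3}). I would write the full-state operator in the minimum-norm least-squares form $\bA_\bX=\bX'\bV\bSigma^{-1}\bU^*$ implied by Eqs.~(\ref{eq:dmdalg1})--(\ref{eq:dmdalg2}), and likewise take $\bA_\bY=\bY'\bY^{+}$ for the projected data, so that the claim becomes an identity between two explicit products of data matrices and the measurement matrix $\bC$.

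First I would establish the identity on the column space of the snapshots. Left-multiplying $\bX'=\bA_\bX\bX$ by $\bC$ and using $\bY=\bC\bX$ and $\bY'=\bC\bX'$ gives $\bY'=\bC\bA_\bX\bX$, while the projected fit reads $\bY'=\bA_\bY\bY=\bA_\bY\bC\bX$. Subtracting yields $(\bC\bA_\bX-\bA_\bY\bC)\bX=\mathbf{0}$, so the two sides of the claimed identity already agree on every vector lying in the column space of $\bX$.

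The hard part is promoting this to an identity on all of $\mathbb{R}^n$: since $\bX$ is tall ($n\gg m$) and rank-deficient in its rows, one cannot simply cancel $\bX$ on the right, and without further hypotheses the two sides differ on the orthogonal complement of the snapshot subspace. The pseudoinverse structure handles one half, because $\bA_\bX=\bX'\bV\bSigma^{-1}\bU^*$ annihilates any vector orthogonal to the column space of $\bU$ (equivalently of $\bX$), so $\bC\bA_\bX$ vanishes there. It then remains to show $\bA_\bY\bC$ vanishes on the same complement, and this is exactly where the assumptions enter: sparsity in $\bPsi$ confines all snapshots to a fixed $K$-dimensional subspace (Assumptions~\ref{ass1} and~\ref{ass3}), and the restricted isometry principle (Assumption~\ref{ass2}) makes $\bC$ a near-isometry, hence injective, on that subspace. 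This forces the two left inverses $\bX^{+}$ and $(\bC\bX)^{+}\bC$ of $\bX$ to coincide where it matters, so the complementary contributions cancel and the full identity follows. I expect this final cancellation --- checking that $(\bC\bX)^{+}\bC$ and $\bX^{+}$ act identically on the sparse subspace --- to be the only delicate step; when $\bC$ restricts to an exact isometry there the equality is immediate, foreshadowing the unitary-invariance result of Sec.~\ref{sec:csdmd:2}.
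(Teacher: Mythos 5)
Your opening two paragraphs are, in more careful language, exactly the paper's own proof: substitute $\bY=\bC\bX$ and $\bY'=\bC\bX'$ into $\bY'=\bA_\bY\bY$ to obtain $\bC\bX'=\bA_\bY\bC\bX$, then right-multiply by $\bX^\dagger=\bV_\bX\bSigma_\bX^{-1}\bU_\bX^*$ so that the left side becomes $\bC\bA_\bX$. You are also right --- and here you are more careful than the paper --- that this manipulation by itself only proves $(\bC\bA_\bX-\bA_\bY\bC)\bX=\mathbf{0}$: right-multiplying by $\bX^\dagger$ leaves $\bA_\bY\bC\bX\bX^\dagger=\bA_\bY\bC\bU_\bX\bU_\bX^*$ on the right, and the paper silently drops the projector $\bU_\bX\bU_\bX^*$; its closing remark of Sec.~\ref{sec:csdmd:1}, which concedes that in general only $(\bC\bX)^\dagger\approx\bX^\dagger\bC^*$ holds, is where this looseness is tacitly acknowledged.

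The genuine gap is in your final step. What you need is $\bA_\bY\bC v=\mathbf{0}$ for every $v\in\mathrm{col}(\bX)^\perp$. Writing $\bA_\bY\bC=\bY'\bV_\bY\bSigma_\bY^{-1}\bU_\bY^*\bC$ and noting $\mathrm{col}(\bU_\bY)=\mathrm{col}(\bC\bX)$, this requires $\bX^*\bC^*\bC v=0$ for all such $v$, i.e.\ that the self-adjoint matrix $\bC^*\bC$ leave $\mathrm{col}(\bX)$ invariant. Unitarity gives this exactly ($\bC^*\bC=\mathbf{I}$), but the RIP cannot: it is a \emph{near}-isometry with constant $\delta_K>0$, so it preserves inner products (hence orthogonality) only approximately, and only for \emph{sparse} vectors --- while the offending vectors $v\in\mathrm{col}(\bX)^\perp$ are generically not sparse, so Assumptions~\ref{ass1}--\ref{ass3} place no constraint on $\bC v$ at all. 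Note also that the half of your cancellation that does work, $(\bC\bX)^\dagger\bC=\bX^\dagger$ on $\mathrm{col}(\bX)$, needs only that $\bC$ preserve the rank of $\bX$, not RIP; so your appeal to RIP is simultaneously unnecessary for the column-space part and insufficient for the complement. The statements that can actually be proved are: (i) the identity restricted to $\mathrm{col}(\bX)$, with no sparsity assumptions --- your second paragraph, which is what the paper's proof really establishes and all that Theorem~\ref{thm:dmd} uses, since eigenvectors with $\lambda\neq 0$ lie in $\mathrm{col}(\bX')\subseteq\mathrm{col}(\bX)$ under the exactness condition of the paper's remark; or (ii) the full matrix identity under the extra hypothesis that $\bC^*\bC$ fixes $\mathrm{col}(\bX)$, e.g.\ $\bC$ unitary. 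In the compressive-sampling setting the lemma holds only approximately, and any proof claiming exact equality from RIP alone must fail at precisely the step you flagged as delicate.
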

\begin{proof}
We may substitute $\bY=\bC\bX$ in $\bY'=\bA_\bY\bY$:
\begin{align*}
\bC\bX' = \bA_\bY\bC\bX.
\end{align*}
Taking the right pseudo-inverse of $\bX$ yields
\begin{align*}
&\bC\bX'\bV_\bX\bSigma_\bX^{-1}\bU_\bX^*=\bA_\bY\bC\\
\Longrightarrow\quad&\bC\bA_\bX=\bA_\bY\bC.
\end{align*}
\end{proof}

The above lemma allows us to prove the following theorem, which establishes the central connection between DMD on full and compressed data. 

\begin{theorem}
An eigenvector $\bphi_x$ of $\bA_\bX$ projects to an eigenvector $\bC\bphi_x$ of $\bA_\bY$ with the same eigenvalue $\lambda$.
\label{thm:dmd}
\end{theorem}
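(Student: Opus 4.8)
The plan is to reduce everything to the commutation relation already established in Lemma~\ref{lemma:dmd}, namely $\bC\bA_\bX = \bA_\bY\bC$, and then simply push the eigenvector $\bphi_x$ through this identity. First I would start from the defining eigenvalue equation
\begin{align*}
\bA_\bX\bphi_x = \lambda\bphi_x,
\end{align*}
and left-multiply both sides by the measurement matrix $\bC$, obtaining $\bC\bA_\bX\bphi_x = \lambda\bC\bphi_x$. Next I would invoke Lemma~\ref{lemma:dmd} to rewrite the left-hand side as $\bA_\bY\bC\bphi_x$, which yields
\begin{align*}
\bA_\bY(\bC\bphi_x) = \lambda(\bC\bphi_x).
\end{align*}
Reading this off, the projected vector $\bC\bphi_x$ satisfies the eigenvalue relation for $\bA_\bY$ with the same eigenvalue $\lambda$, which is exactly the claim.

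The algebra above is essentially immediate once the lemma is in hand; the one genuine subtlety—and the step I expect to be the main obstacle—is verifying that $\bC\bphi_x$ is not the zero vector, since the zero vector is not an eigenvector. In general a tall-to-short projection $\bC$ has a nontrivial null space, so without further hypotheses the projected mode could collapse. This is precisely where the sparsity and incoherence structure enters. Under Assumption~\ref{ass3}, the DMD mode $\bphi_x$ lies in the same sparse subspace of $\bPsi$ as the data, and under Assumption~\ref{ass2} the pair $\bC,\bPsi$ satisfies the restricted isometry principle on that subspace. I would therefore write $\bphi_x = \bPsi\bs$ for a $K$-sparse coefficient vector $\bs$ and apply the RIP bound
\begin{align*}
(1-\delta_K)\|\bs\|_2^2 \leq \|\bC\bPsi\bs\|_2^2,
\end{align*}
which guarantees $\|\bC\bphi_x\|_2 \geq \sqrt{1-\delta_K}\,\|\bphi_x\|_2 > 0$ whenever $\bphi_x \neq 0$ and $\delta_K < 1$. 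This certifies that the projected eigenvector is genuinely nonzero and completes the argument.

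One remark on scope: in the purely algebraic sense, the commutation step is valid for any $\bC$, so the \emph{relation} $\bA_\bY\bC\bphi_x = \lambda\bC\bphi_x$ always holds; it is only the promotion of $\bC\bphi_x$ to a bona fide eigenvector that requires the isometry hypothesis to rule out cancellation. I would make sure to state this distinction explicitly, so the reader sees that the sparsity assumptions are doing real work only at the final nonvanishing step and not in the core computation.
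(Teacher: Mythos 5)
Your proposal is correct and follows essentially the same route as the paper: left-multiplying the eigenvalue relation $\bA_\bX\bphi_x=\lambda\bphi_x$ by $\bC$ and invoking Lemma~\ref{lemma:dmd} to arrive at $\bA_\bY(\bC\bphi_x)=\lambda(\bC\bphi_x)$ is exactly the paper's two-line argument. Your additional RIP-based verification that $\bC\bphi_x\neq 0$ is a sensible refinement, but it is not part of the paper's proof---the paper instead acknowledges this issue in the surrounding text, noting that if $\bphi_x$ lies in the null space of $\bC$ then the theorem ``applies trivially.''
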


\begin{proof}
\begin{align*}
&\bC\bA_\bX\bphi_x = \bA_\bY\bC\bphi_x\\
&\lambda \bC\bphi_x = \bA_\bY\bC\bphi_x
\end{align*}
so $\bC\bphi_x$ is an eigenvector of $\bA_\bY$ with eigenvalue $\lambda$.
\end{proof}

If $\bC$ is chosen poorly, so that $\bphi_x$ is in the null-space of $\bC$, then Theorem~\ref{thm:dmd} applies trivially.  
Theorem~\ref{thm:dmd} \emph{does not} guarantee that every eigenvector $\bphi_y$ of $\bA_\bY$ is the projection of an eigenvector of $\bA_\bX$ through $\bC$.  
However, it is typically the case that the rank $r$ of $\bX$ is small compared with $m$, the number of columns, and $p$, the number of output measurements, so that $r<p,m\ll n$.  In fact, the assumption of low-rank structure is implicit in most dimensionality reduction strategies.  
As long as the columns of $\bX$ are not in the null-space of $\bC$ and the rank of $\bY=\bC\bX$ is also $r$, then the $r$ nontrivial DMD eigenvalues $\bLambda_\bY$ of $\bY,\bY'$ will be be equal to $\bLambda_\bX$.  Similarly, the projected DMD eigenvectors are related to full-state DMD eigenvectors as described in Theorem~\ref{thm:dmd}.

\begin{corollary}
Given Assumptions 1-3, we may reconstruct $\bphi_x$ from the eigenvector ${\bphi_y=\bC\bphi_x=\bC\bPsi\bphi_s}$ from Theorem~\ref{thm:dmd} by compressive sampling.
\end{corollary}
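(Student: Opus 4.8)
The plan is to recognize that the chain of identities $\bphi_y = \bC\bphi_x = \bC\bPsi\bphi_s$ supplied by Theorem~\ref{thm:dmd} is, read from right to left, precisely the underdetermined compressive-sampling system of Eq.~(\ref{eq:underdet}): the known quantity is the projected eigenvector $\bphi_y\in\mathbb{R}^p$ (obtained directly from the small matrix $\bA_\bY$), the measurement operator is $\bC\bPsi$, and the unknown is the coefficient vector $\bphi_s\in\mathbb{R}^n$. Once $\bphi_s$ is recovered, the full-state mode follows from a single matrix multiply, $\bphi_x = \bPsi\bphi_s$. Thus the corollary reduces to checking that the hypotheses of the standard $\ell_1$ recovery guarantee hold for this particular system.

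First I would establish that $\bphi_s$ is sparse, which is where Assumption~\ref{ass3} does the work. Since every column of $\bX'$ lies in the same $K$-dimensional sparse subspace of $\bPsi$, and the DMD modes in Eq.~(\ref{eq:dmdalg4}) are $\bPhi = \bX'\bV\bSigma^{-1}\bW$, i.e. linear combinations of the columns of $\bX'$, each eigenvector $\bphi_x$ lies in that same subspace and admits a representation $\bphi_x = \bPsi\bphi_s$ with $\bphi_s$ at most $K$-sparse. I would also invoke the rank condition discussed after Theorem~\ref{thm:dmd} to ensure that the $\bphi_y$ returned by the projected DMD is a genuine projection $\bC\bphi_x$ rather than a spurious eigenvector of $\bA_\bY$, so that $\bphi_y$ is consistent with a sparse preimage.

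Next, Assumption~\ref{ass2} guarantees that $\bC$ is incoherent with respect to $\bPsi$, so that $\bC\bPsi$ satisfies the restricted isometry principle with a small constant $\delta_K$ on $K$-sparse vectors. Provided the number of measurements obeys $p=\mathcal{O}(K\log(n/K))$, the combinatorial problem Eq.~(\ref{eq:l0min}) may be relaxed to the convex program Eq.~(\ref{eq:l1min}), whose unique minimizer is the true $\bphi_s$; equivalently, one may run a greedy solver such as CoSaMP. Reconstructing $\bphi_x = \bPsi\bphi_s$ then completes the argument.

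The main obstacle is not the recovery step itself, which is a direct citation of the standard compressive-sampling theorem, but the sparsity bookkeeping that feeds into it. The delicate point is justifying that $\bphi_x$ truly inherits the active (sparse) subspace of the data: Assumption~\ref{ass3} asserts this for the DMD modes, but I would want to confirm both that the construction in Eq.~(\ref{eq:dmdalg4}) keeps $\bphi_x$ inside that subspace and that the eigenvector returned by the projected DMD is exactly $\bC\bphi_x$. Once those two facts are secured, the corollary is immediate from Eq.~(\ref{eq:l1min}).
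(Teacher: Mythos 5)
Your proposal is correct and follows essentially the same route the paper intends: the paper states this corollary without a separate proof precisely because it is the concatenation of Assumption~\ref{ass3} (the DMD modes, being linear combinations $\bX'\bV\bSigma^{-1}\bW$ of the snapshots, inherit the common sparse subspace), Assumption~\ref{ass2} (RIP for $\bC\bPsi$), and Theorem~\ref{thm:dmd} ($\bphi_y=\bC\bphi_x$), which together reduce recovery of $\bphi_s$ to the standard $\ell_1$/greedy problem of Eq.~(\ref{eq:l1min}). Your two caveats---that the projected eigenvector must be a genuine projection (the rank condition discussed after Theorem~\ref{thm:dmd}) and that Eq.~(\ref{eq:dmdalg4}) keeps $\bphi_x$ in the sparse subspace---are exactly the points the paper addresses in the surrounding text, so nothing is missing.
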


\begin{remark}
Even starting with full-state measurements $\bX,\bX'$, it is beneficial to compress, compute the DMD, and reconstruct the modes according to:
\begin{align}
\tilde{\bPhi}_{\bX} = \bX'\bV_{\bY}\bSigma_{\bY}^{-1}\bW_{\bY}.\label{eq:compressedDMD}
\end{align}
We refer to this as \emph{\underline{compressed DMD}}, as opposed to the \emph{\underline{compressive-sampling DMD}} above.
\end{remark}

\begin{remark}
Lemma~\ref{lemma:dmd} and Theorem~\ref{thm:dmd} apply when $\bX,\bX'$ and $\bY,\bY'$ are \emph{exactly} related by $\bA_\bX$ and $\bA_\bY$ from Eqs.~(\ref{eq:dmd}) and (\ref{eq:dmd2}), as is the case when $\bX,\bX'$ are generated by a linear dynamical system (or more generally, when $\bX'$ is in the column space of $\bX$).  However, often $\bX'\approx \bA_\bX\bX$, where $\bA_\bX = \bX'\bX^\dagger\triangleq \bX'\bV_\bX\bSigma_\bX^{-1}\bU_\bX^*$.  In this case, we require that $(\bC\bX)^\dagger\approx\bX^\dagger\bC^*$, which holds either when $\bC$ satisfies the Johnson-Lindenstrauss theorem, or $\bX=\bPsi\bS$ and $\bC\bPsi$ satisfies the restricted isometry principle, as in the next section.
\end{remark}

\subsection{Invariance of DMD to unitary transformations}\label{sec:csdmd:2}
In this section, we show that the dynamic mode decomposition is invariant to left and right unitary transformations of the data $\bX$ and $\bX'$.  This is the theoretical foundation for the next step, where we relax the condition of a unitary measurement matrix, and instead require that our data is sparse in some basis and the measurements are incoherent with respect to that basis.  

This section relies on the fact that the singular value decomposition of $\bY=\bC\bX$ is related to the singular value decomposition of $\bX=\bU\bSigma \bV^*$ if $\bC$ is unitary:
\begin{align}
\bY = \bC\bU\bSigma \bV^*
\end{align}
So now $\bC\bU$ are the left-singular vectors of $\bY$.  Similarly, if $\bY = \bX\bP^*$, then $\bY = \bU\bSigma\bV^*\bP^*$.  This is discussed in more detail in the Appendix.

\begin{theorem}\label{th1}
The DMD eigenvalues and eigenvectors are invariant to right-transformations $\bP$ of the columns of $\bX$ and $\bX'$ if $\bP$ is unitary.
\end{theorem}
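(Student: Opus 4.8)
The plan is to run the four-step exact DMD algorithm directly on the right-transformed data and to check that every quantity feeding into the eigenvalues and modes agrees with the full-state computation. A right-transformation of the columns means forming $\bY = \bX\bP^*$ and $\bY' = \bX'\bP^*$, where $\bP$ is an $m\times m$ unitary matrix, so that $\bP^*\bP = \bP\bP^* = \mathbf{I}$. I would fix the reduced SVD $\bX = \bU\bSigma\bV^*$ from step~1 of the full-state algorithm and then express every quantity associated with $\bY,\bY'$ in terms of $\bU,\bSigma,\bV$ and $\bP$.

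First I would identify the SVD of $\bY$. Substituting the SVD of $\bX$ gives $\bY = \bU\bSigma\bV^*\bP^* = \bU\bSigma(\bP\bV)^*$. The matrix $\bP\bV$ has orthonormal columns, since $(\bP\bV)^*(\bP\bV) = \bV^*\bP^*\bP\bV = \bV^*\bV = \mathbf{I}$; hence $\bU_\bY = \bU$, $\bSigma_\bY = \bSigma$, and $\bV_\bY = \bP\bV$ is a valid reduced SVD of $\bY$. I would then substitute these into step~2,
\begin{align*}
\tilde{\bA}_\bY = \bU_\bY^*\bY'\bV_\bY\bSigma_\bY^{-1} = \bU^*(\bX'\bP^*)(\bP\bV)\bSigma^{-1} = \bU^*\bX'\bV\bSigma^{-1} = \tilde{\bA}_\bX,
\end{align*}
where the unitary cancellation $\bP^*\bP = \mathbf{I}$ is exactly what makes the transformation disappear. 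Since the reduced matrices are literally identical, step~3 returns the same $\bW$ and the same $\bLambda$, establishing invariance of the DMD eigenvalues (and of the eigenvectors $\bW$ of the reduced operator). Finally, in step~4 the same cancellation gives $\bPhi_\bY = \bY'\bV_\bY\bSigma_\bY^{-1}\bW = (\bX'\bP^*)(\bP\bV)\bSigma^{-1}\bW = \bX'\bV\bSigma^{-1}\bW = \bPhi_\bX$, so the full-state DMD modes are recovered unchanged.

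The computation itself is short, so the only point requiring care is the SVD step rather than any hard estimate. I would emphasize that I do not invoke uniqueness of the SVD: it suffices to exhibit one valid factorization of $\bY$, and once $\bU_\bY,\bSigma_\bY,\bV_\bY$ are pinned down the remainder of the algorithm is deterministic. A related subtlety is that eigenvectors are generically non-unique when $\bLambda$ has repeated entries, but because $\tilde{\bA}_\bY = \tilde{\bA}_\bX$ as matrices there is no ambiguity at all, as the two eigenproblems coincide identically. The main obstacle, such as it is, is simply bookkeeping the conformable dimensions of $\bP$ (an $m\times m$ transform of the $m$ snapshots, acting on the right) and confirming that $\bP\bV$ inherits orthonormality; the unitary hypothesis on $\bP$ is used in exactly two places, both in the form $\bP^*\bP = \mathbf{I}$.
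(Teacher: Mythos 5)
Your proof is correct and follows essentially the same route as the paper: identify the (reduced) SVD of the transformed data, substitute into the four steps of exact DMD, and let the unitary cancellation $\bP^*\bP=\mathbf{I}$ collapse everything back to the full-state quantities. The only differences are cosmetic---you write the transformation as $\bY=\bX\bP^*$ rather than the paper's $\bY=\bX\bP$, and you are slightly more careful in explicitly verifying that $\bP\bV$ has orthonormal columns and in noting that SVD uniqueness is not needed.
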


\begin{proof}
Let $\bP$ be a $m\times m$ unitary matrix that acts on the columns of $\bX$ and $\bX'$ as: $\bY=\bX\bP$, and $\bY'=\bX'\bP$.  The four steps of the exact DMD algorithm proceed as:
\begin{enumerate}
\item $\bY=\bU\bSigma \bV^*\bP$
\item $\tilde{\bA}_{\bY}=\bU^*\bY'\bP^*\bV\bSigma^{-1}= \bU^*\bX'\bV\bSigma^{-1}=\tilde{\bA}_{\bX}$
\item $\tilde{\bA}_{\bY}\bW_{\bY}=\bW_{\bY}\bLambda_{\bY}$ same as $\tilde{\bA}_{\bX}\bW_{\bX}=\bW_{\bX}\bLambda_{\bX}$, so $\bW_{\bY}=\bW_{\bX}$ and $\bLambda_{\bY}=\bLambda_{\bX}$.
\item $\bPhi_{\bY} = \bY'\bP^*\bV\bSigma^{-1}\bW_{\bY} = \bPhi_{\bX}$.
\end{enumerate}
Therefore, the DMD eigenvalues and eigenvectors are invariant to right-unitary transformations of the data. 
\end{proof}

\begin{corollary}
The DMD eigenvalues and eigenvectors are invariant to permutations of the columns of $\bX$ and $\bX'$\footnote{This was communicated by Jonathan Tu in a conversation.}.
\end{corollary}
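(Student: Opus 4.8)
The plan is to recognize this corollary as an immediate special case of Theorem~\ref{th1}. The key observation is that any reordering of $m$ columns is realized by a permutation matrix $\bP\in\mathbb{R}^{m\times m}$ (exactly one entry equal to $1$ in each row and each column, zeros elsewhere), and that every such matrix is unitary. Indeed, the columns of $\bP$ are distinct standard basis vectors, hence orthonormal, so that $\bP^*\bP=\bP\bP^*=\mathbf{I}$. Thus permutation matrices sit inside the class of right-transformations already covered by the preceding theorem, and no new machinery is needed.

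First I would fix a permutation $\sigma$ of $\{0,1,\dots,m-1\}$ and let $\bP$ be the associated permutation matrix, so that right-multiplication by $\bP$ reorders columns according to $\sigma$. Applying the \emph{same} permutation to both snapshot matrices gives $\bY=\bX\bP$ and $\bY'=\bX'\bP$, which is precisely the right-transformation hypothesized in Theorem~\ref{th1}. Since $\bP$ is unitary, that theorem applies verbatim and yields $\bLambda_{\bY}=\bLambda_{\bX}$ and $\bPhi_{\bY}=\bPhi_{\bX}$, establishing the claim.

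There is essentially no technical obstacle once the unitarity of $\bP$ is in hand; the only point that deserves care is conceptual rather than computational. One must apply a single permutation $\bP$ to $\bX$ and $\bX'$ \emph{simultaneously}, so that each snapshot pair $(\bx_k,\bx_{k+1})$ is kept intact and only the collection of pairs is reordered. This is consistent with the fact that the least-squares operator $\bA_\bX$ depends solely on the set of input--output pairs, not on their order: writing $\bA_\bY=\bY'\bY^\dagger=\bX'\bP(\bX\bP)^\dagger=\bX'\bP\bP^*\bX^\dagger=\bA_\bX$ shows the operator itself is unchanged, which is even stronger than invariance of its spectrum. I would close by noting that this dovetails with the exact-DMD relaxation mentioned earlier, in which snapshot pairs may be sampled at arbitrary (non-consecutive) times, since reordering the pairs cannot affect the resulting decomposition.
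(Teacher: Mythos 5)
Your proposal is correct and follows exactly the paper's (implicit) route: the corollary is stated as an immediate consequence of Theorem~\ref{th1}, using the fact that a permutation matrix is unitary, which is precisely your argument. The extra remark that $\bA_\bY=\bX'\bP\bP^*\bX^\dagger=\bA_\bX$ is a nice strengthening but not needed beyond what the theorem already gives.
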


\begin{theorem}\label{thm3}
The DMD eigenvalues are invariant to left-transformations $\bC$ of data $\bX$ if $\bC$ is unitary, and the resulting DMD modes are projected through $\bC$.  
\end{theorem}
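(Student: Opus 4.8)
The plan is to mirror the four-step structure used in the proof of Theorem~\ref{th1}, but now exploiting the left-transformation SVD relation stated just before the theorem. Set $\bY = \bC\bX$ and $\bY' = \bC\bX'$ with $\bC$ unitary, and start from $\bX = \bU\bSigma\bV^*$. The key observation is that because $\bC$ is unitary, the columns of $\bC\bU$ remain orthonormal, so $\bY = (\bC\bU)\bSigma\bV^*$ is a genuine SVD of $\bY$: the singular values $\bSigma$ and right-singular vectors $\bV$ are untouched, and only the left-singular basis is rotated from $\bU$ to $\bC\bU$. This is the one fact that must be established before the algorithm is run.

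Carrying this through the exact DMD algorithm, step~2 computes the reduced operator $\tilde{\bA}_\bY = (\bC\bU)^* \bY' \bV\bSigma^{-1} = \bU^*\bC^*\bC\bX'\bV\bSigma^{-1}$, where unitarity enters decisively through $\bC^*\bC = \mathbf{I}$, collapsing this to $\bU^*\bX'\bV\bSigma^{-1} = \tilde{\bA}_\bX$. Because the reduced operators coincide exactly, their eigendecompositions in step~3 are identical, giving $\bW_\bY = \bW_\bX$ and $\bLambda_\bY = \bLambda_\bX$, which is the eigenvalue-invariance claim. Finally, step~4 yields $\bPhi_\bY = \bY'\bV\bSigma^{-1}\bW_\bY = \bC\bX'\bV\bSigma^{-1}\bW_\bX = \bC\bPhi_\bX$, precisely the statement that the modes are projected through $\bC$.

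I expect no genuine obstacle, since everything reduces to the single cancellation $\bC^*\bC = \mathbf{I}$; the only point requiring care is conceptual rather than computational. Unlike the right-transformation of Theorem~\ref{th1}, where $\bU$, $\bSigma$, and $\bV$ were all preserved and invariance followed from $\bP^*\bP = \mathbf{I}$ being absorbed through the right-singular vectors, a left-transformation actively rotates the left-singular basis to $\bC\bU$. One must therefore verify that $\bC\bU$ is an admissible left-singular factor (exactly what unitarity guarantees) and then track that this rotation propagates linearly into the modes, producing $\bC\bPhi_\bX$, while cancelling out of the reduced operator, preserving $\bLambda_\bX$. If one wanted to be fully rigorous about the null-eigenvalue modes, which are computed via $\bU\bW$ rather than Eq.~(\ref{eq:dmdalg4}) per the Remark following the algorithm, one would additionally note that these transform as $\bC\bU\bW$, consistent with the same projection rule.
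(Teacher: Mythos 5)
Your proof is correct and takes essentially the same route as the paper's: both push $\bY=\bC\bX$, $\bY'=\bC\bX'$ through the four steps of exact DMD, using that $\bY=(\bC\bU)\bSigma\bV^*$ is a valid SVD and the cancellation $\bC^*\bC=\mathbf{I}$ to get $\tilde{\bA}_{\bY}=\tilde{\bA}_{\bX}$, hence $\bLambda_{\bY}=\bLambda_{\bX}$ and $\bPhi_{\bY}=\bC\bPhi_{\bX}$. Your explicit verification that $\bC\bU$ is an admissible left-singular factor (which the paper defers to its Appendix) and your note on the zero-eigenvalue modes transforming as $\bC\bU\bW$ are welcome extra care, not a different argument.
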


\begin{proof}
Again, let $\bY=\bC\bX$ and $\bY'=\bC\bX'$.  Exact DMD proceeds as follows:
\begin{enumerate}
\item $\bY = \bC\bU\bSigma\bV^*$
\item $\tilde{\bA}_{\bY} = \bU^*\bC^*\bY'\bV\bSigma^{-1} = \bU^*\bX'\bV\bSigma^{-1} = \tilde{\bA}_{\bX}$
\item The eigen-decomposition ($\bLambda_{\bY}$,$\bW_{\bY}$) is the same as ($\bLambda_{\bX}$,$\bW_{\bX}$) since $\tilde{\bA}_{\bY}=\tilde{\bA}_{\bX}$.
\item $\bPhi_{\bY} = \bY'\bV\bSigma^{-1}\bW = \bC\bPhi_{\bX}$.
\end{enumerate}
Therefore, the DMD modes $\bPhi_{\bY}$ are the projection of $\bPhi_{\bX}$ through $\bC$:  $\bPhi_\bY=\bC\bPhi_\bX$.
\end{proof}

\begin{corollary}
The DMD computed in the spatial domain is related to the DMD computed in the Fourier domain or in a coordinate system defined by principle components.  Both the discrete Fourier transform $\mathcal{F}$ and the principal component coordinate transformation $\bU$ are unitary transformations, and so Theorem~\ref{thm3} applies with $\bC=\mathcal{F}$ or $\bC=\bU$, respectively.  The DMD eigenvalues will be unchanged, and the DMD eigenvectors will be projected into the new coordinates.
\end{corollary}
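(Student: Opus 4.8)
The plan is to treat this corollary as a direct specialization of Theorem~\ref{thm3}: once we check that each named coordinate transformation is unitary, the conclusion (equal eigenvalues, projected eigenvectors) follows by simply reading off the statement of that theorem. So the only substantive work is to exhibit the two matrices and verify unitarity, setting $\bC=\mathcal{F}$ in the Fourier case and $\bC=\bU$ in the principal-component case.

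First I would verify that the (suitably normalized) discrete Fourier transform is unitary. Writing its entries as roots of unity $\omega^{jk}$ with $\omega=e^{-2\pi i/n}$ and including the symmetric $1/\sqrt{n}$ factor, orthogonality of the characters gives $\mathcal{F}^*\mathcal{F}=\mathcal{F}\mathcal{F}^*=I$. The one point to flag here is normalization: the transform is genuinely unitary only with the $1/\sqrt{n}$ convention, and an unnormalized DFT would merely rescale $\bSigma$ by a constant --- which, by inspection of the proof of Theorem~\ref{thm3}, still leaves $\tilde{\bA}$ and hence $\bLambda$ unchanged, so the conclusion is robust to this choice.

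Next I would verify unitarity of the principal-component transformation. Taking the full SVD $\bX=\bU\bSigma\bV^*$ with $\bU$ square, we have $\bU^*\bU=\bU\bU^*=I$, so $\bU$ is unitary and Theorem~\ref{thm3} applies verbatim with $\bC=\bU$ (and with $\bC=\bU^*$ for the actual projection of a snapshot into PC coordinates). The subtlety to watch --- and the closest thing to an obstacle --- is the economy SVD, in which $\bU\in\mathbb{R}^{n\times r}$ has orthonormal columns but is not square: then $\bU^*\bU=I$ while $\bU\bU^*$ is only the orthogonal projector onto the column space. However, the proof of Theorem~\ref{thm3} uses the transformation only through $\bC^*\bC=I$ (to cancel the factor in step~2 of the DMD algorithm), so this weaker isometry property already suffices; I would either pass to the full SVD or simply note that $\bC^*\bC=I$ is all that is invoked.

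With unitarity (or the isometry $\bC^*\bC=I$) established in both cases, Theorem~\ref{thm3} delivers the corollary immediately: the DMD eigenvalues $\bLambda$ are identical across the spatial, Fourier, and PC representations, and the modes transform as $\bPhi_{\bY}=\bC\bPhi_{\bX}$, i.e.\ the Fourier-domain and PC-domain modes are exactly the Fourier transform and the PC projection of the spatial-domain modes. There is no deep difficulty beyond the normalization and full-versus-economy bookkeeping noted above.
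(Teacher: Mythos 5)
Your proposal is correct and takes essentially the same route as the paper, which offers no separate proof at all: the corollary is justified in-line as an immediate application of Theorem~\ref{thm3} once $\mathcal{F}$ and $\bU$ are identified as unitary, exactly your plan. Your added care about the $1/\sqrt{n}$ DFT normalization and the economy-SVD case (observing that the proof of Theorem~\ref{thm3} only ever invokes $\bC^*\bC=\mathbf{I}$, so an isometry suffices) is a sound refinement rather than a different argument.
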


Now we may consider data $\bX,\bX'$ that are sparse in some basis $\bPsi$, and relax the condition that $\bC$ is unitary.  Instead, $\bC$ must be incoherent with respect to the sparse basis, so that the product $\bC\bPsi$ satisfies a restricted isometry principle.  This allows us to compute DMD on the projected data $(\bY,\bY')=(\bC\bX,\bC\bX')=(\bC\bPsi\bS,\bC\bPsi\bS')$ and reconstruct full-state DMD modes by compressive sampling on $\bPhi_\bY=\bC\bPhi_\bX=\bC\bPsi\bPhi_\bS$.

\begin{figure}
\begin{center}
\vskip .2in
\begin{overpic}[width=0.45\textwidth]{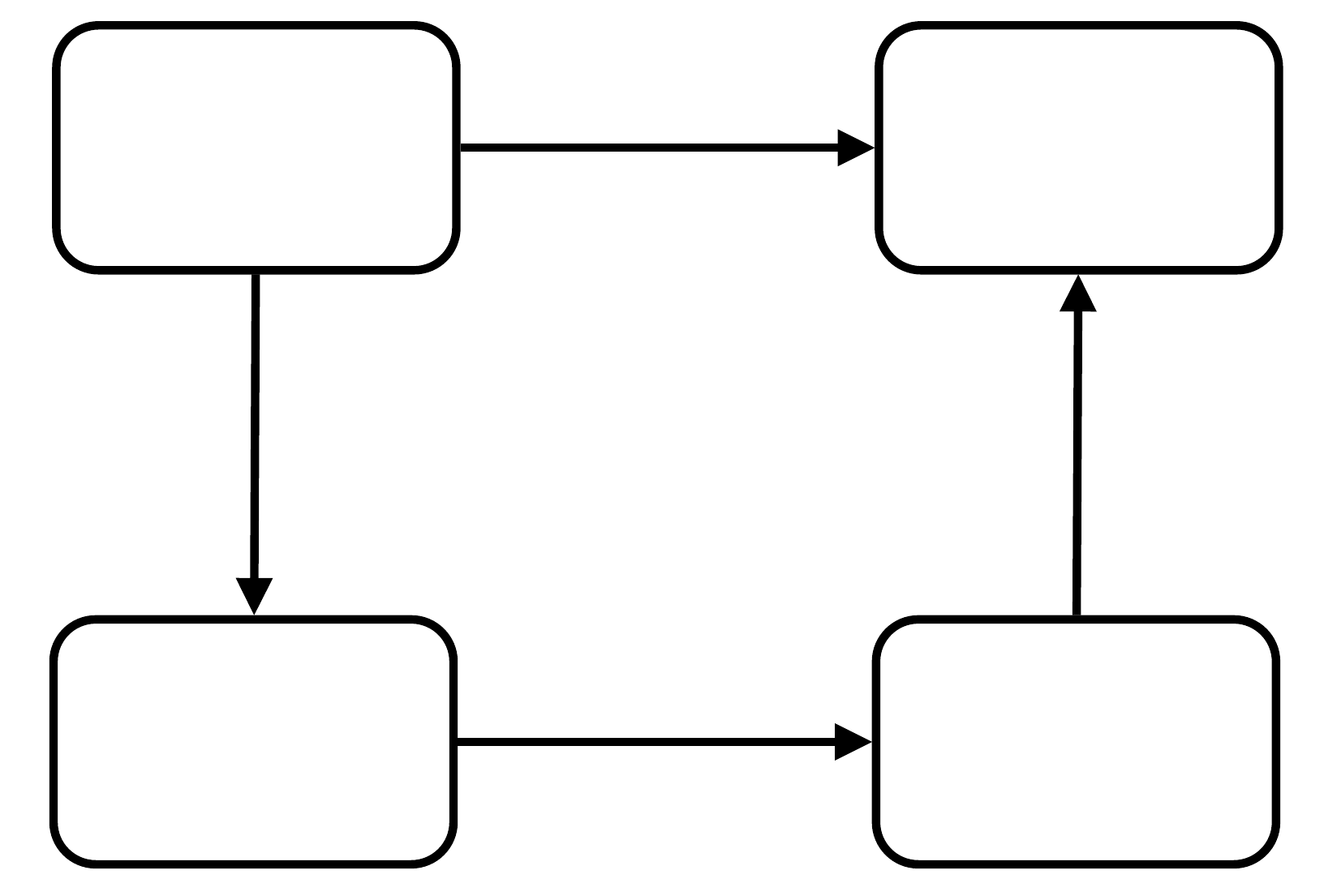}
\put(14,55){$\bX, \bX'$}
\put(74,55){$\boldsymbol{\Lambda_{\bX}},\bPhi_{\bX}$}
\put(14,10){$\bY, \bY'$}
\put(74,10){$\boldsymbol{\Lambda}_{\bY},\bPhi_{\bY}$}
\put(12,33.5){$\bC$}
\put(45,60){DMD}
\put(41.5,18){Projected}
\put(45,13){DMD}
\put(84,24){\begin{sideways}reconstruct\end{sideways}}
\put(13,68.5){\large \bf Data}
\put(73.5,68.5){\large \bf Modes}
\end{overpic}
\caption{Schematic of compressive-sampling and compressed DMD as they relate to data $\bX,\bX'$ and projected data $\bY,\bY'$.  $\bC$ is a projection down to measurements that are incoherent with respect to sparse basis.}\label{fig:schematic}
\vspace{-.2in}
\end{center}
\end{figure}
\subsection{Various approaches and algorithms}\label{sec:pathways}
There are a number of algorithms that arise from various paths in Figure~\ref{fig:schematic} depending on what data we have access to.  The primary paths are: Path 1B (compressed DMD) and Path 2B (compressive sampling DMD).\\

\noindent\textbf{\underline{Path 1}:}  We start with full-state data $\bX,\bX'$.

	\begin{itemize}
	\item[] \hskip -.15in{\bf \underline{Option A}.}  Compute DMD to obtain $(\bLambda_{\bX},\bPhi_{\bX})$.
	\item[] \hskip -.15in{\bf \underline{Option B}.}  Compress data first:
		\begin{enumerate}
		\item[(i)]  Compress $\bX,\bX'$ to $\bY,\bY'$.
		\item[(ii)]  Compute DMD to obtain $(\bLambda_{\bY},\bPhi_{\bY})$ and $\bW_\bY$.
		\item[(iii)]  Reconstruct $\tilde\bPhi_{\bX} = \bX'\bV_{\bY}\bSigma_{\bY}^{-1}\bW_{\bY}$.
		\item[(iv)] [alternative] Perform $\ell_1$-minimization on ${\bPhi_{\bY}=\bC\bPsi\bPhi_{\bS}}$ to reconstruct $\bPhi_{\bS}$ and then construct $\bPhi_{\bX}=\bPsi\bPhi_{\bS}$.
		\end{enumerate}
	\end{itemize}

\noindent\textbf{\underline{Path 2}:}  We only have output-projected data $\bY,\bY'$.
	\begin{itemize}
	\item[] \hskip -.15in{\bf \underline{Option A}.}  First reconstruct $\bX,\bX'$ using compressive sampling.
		\begin{itemize}
		\item[(i)] Perform $\ell_1$-minimization on $\bY=\bC\bPsi\bS$ to solve for $\bS$, and hence $\bX$ (same for $\bX'$).
		\item[(ii)] Compute DMD on $\bX$ (or $\bS$).
		\end{itemize}
	\item[] \hskip -.15in{\bf \underline{Option B}.}  Compute projected DMD and only reconstruct $r$ modes using compressive sampling.
		\begin{itemize}
		\item[(i)] Compute DMD on $\bY,\bY'$ to obtain $(\bLambda_{\bY},\bPhi_{\bY})$
		\item[(ii)] Perform $\ell_1$-minimization on $\bPhi_\bY$ to solve ${\bPhi_{\bY}=\bC\bPsi\bPhi_{\bS}}$ for $\bPhi_{\bS}$ (and hence $\bPhi_{\bX}$).
		\end{itemize}
	\end{itemize}
As a general rule, if you have the full data $\bX,\bX'$, Path 1B (i)-(iii) is the most efficient option; we refer to this as \emph{\underline{compressed DMD}}.  If only the output projected data $\bY,\bY'$ is available, Path 2B is the efficient route, and we refer to this as \emph{\underline{compressive sampling DMD}}.

\begin{figure*}
\begin{center}
\vspace{-.05in}
\begin{overpic}[width=.9\textwidth]{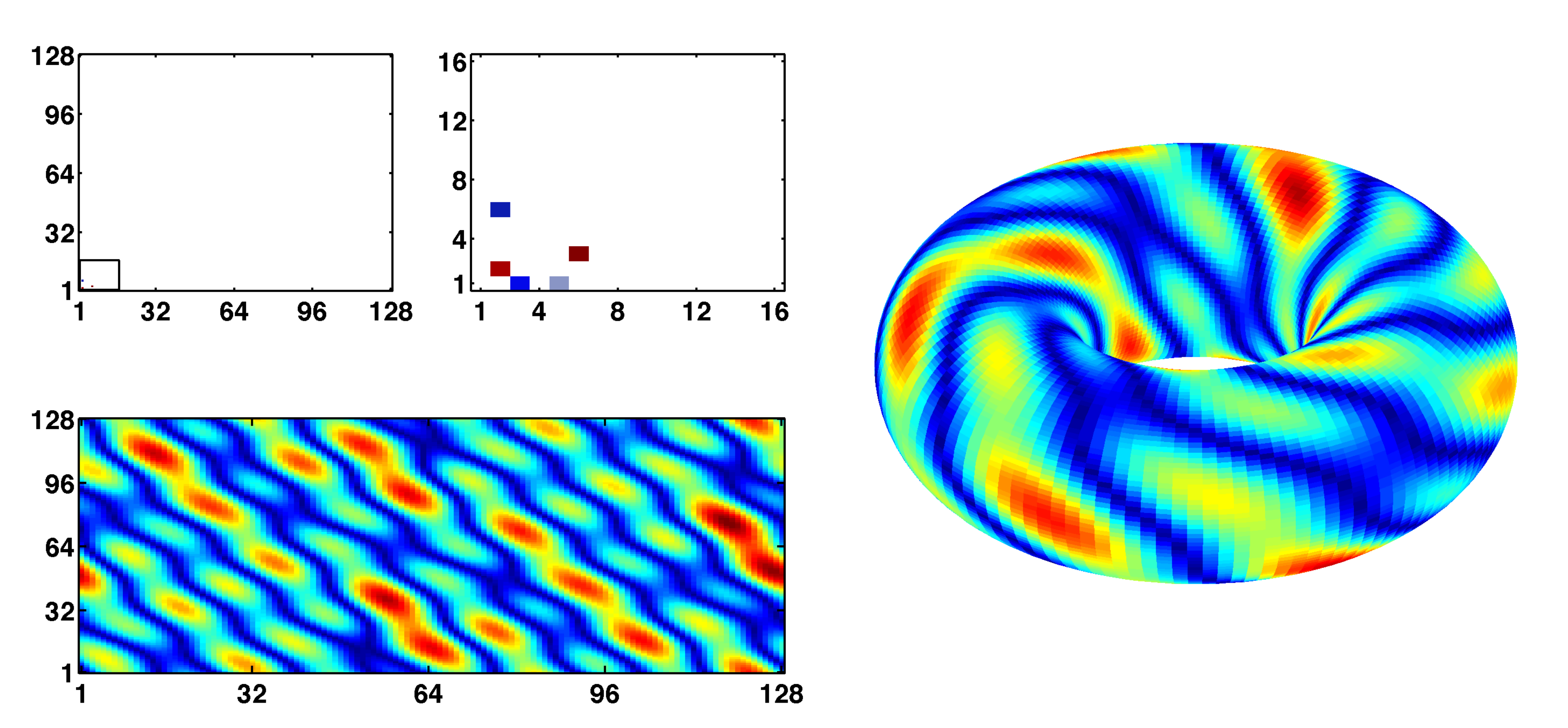}
{\scriptsize 
\put(31.5,33.75){\textbf{1}}
\put(31.5,30){\textbf{2}}
\put(32.75,29){\textbf{3}}
\put(35.25,29){\textbf{4}}
\put(36.5,31){\textbf{5}}
}
\put(21,21){\bf Spatial modes}
\put(19,44){\bf Fourier coefficients}
\put(70,39){\bf Spatial modes}
\end{overpic}
\vskip -.0in
\caption{Illustration of the dynamical system in Example 1.  $K=5$ Fourier coefficients are driven with a random linear dynamical system, resulting in the spatial dynamics shown.  Fifteen point sensors are placed randomly in space.  For video, see  {http://faculty.washington.edu/sbrunton/simulations/csdmd/Torus.mp4}.}\label{fig:fft1}
\end{center}
\end{figure*}

\begin{figure*}
\begin{center}
\begin{overpic}[width=.95\textwidth]{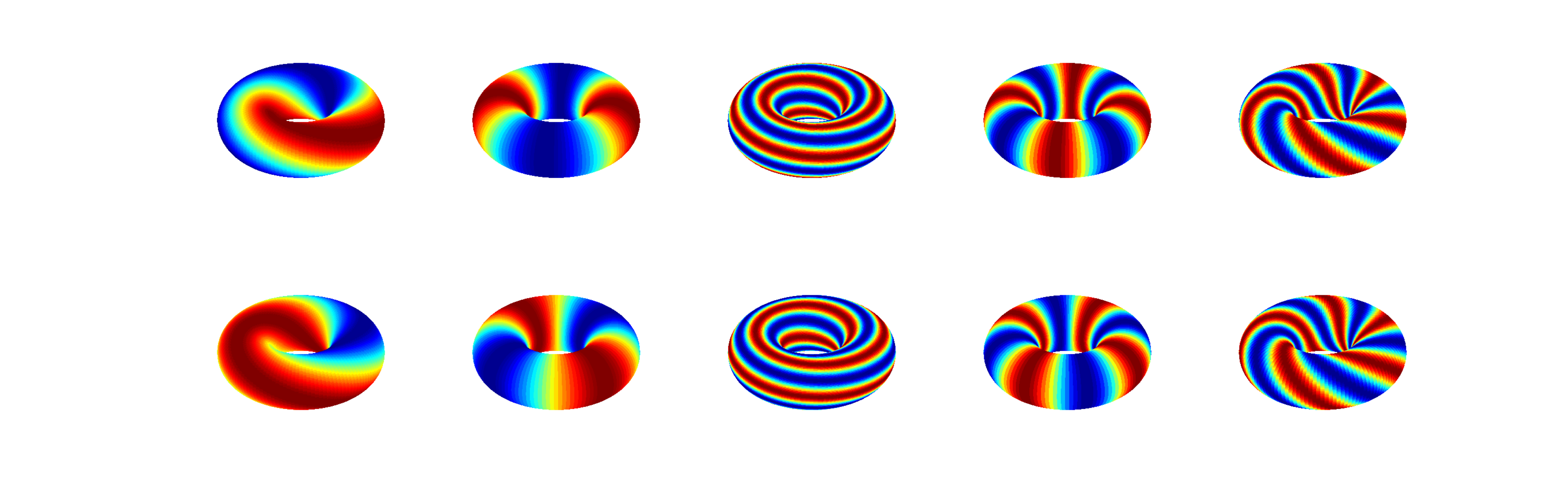}
\put(16,29){Mode 1}
\put(32.5,29){Mode 2}
\put(48.5,29){Mode 3}
\put(65,29){Mode 4}
\put(81,29){Mode 5}
\put(2,24){Real}
\put(2,9){Imaginary}
\end{overpic}
\vskip -.2in
\caption{Spatial-temporal coherent modes corresponding to non-zero Fourier modes.}\label{fig:fft2}
\end{center}
\end{figure*}

\section{Results}\label{sec:results}
We illustrate the above methods on two example problems that are relevant for fluid dynamics.  In the first example, we construct a spatially evolving system that has sparse dynamics in the Fourier domain.  In the second example, we consider the time-varying double-gyre, which has been used as a model for ocean mixing.

\subsection{Example 1: Sparse linear system in Fourier domain}
This system is designed to test to compressive DMD algorithms in a well-controlled numerical experiment.  We impose sparsity by creating a system with $K=5$ non-zero 2D spatial Fourier modes; all other modes are exactly zero.  It is also possible to allow the other Fourier modes to be contaminated with Gaussian noise and impose a very fast stable dynamic in each of these directions.  
We then define a stable linear, time-invariant dynamical system on the $K$ modes.  This is done by randomly choosing a temporal oscillation frequency and small but stable damping rate for each of the modes independently.  In this way, we construct a system in the spatial domain that is a linear combination of coherent spatial Fourier modes that each oscillate at a different fixed frequency.  Figure~\ref{fig:fft1} shows a snapshot of this system at $t=2$.  We see the five large Fourier mode coefficients generate distinct spatial coherent patterns.  Figure~\ref{fig:fft2} shows the five Fourier modes (real and imaginary parts) that contribute to the spatial structures in Figure~\ref{fig:fft1}.

This example is constructed to be an ideal test case for compressed sampling dynamic mode decomposition.  The linear time-invariant system underlying these dynamics is chosen at random, so the data matrix $\bX$ will contain significant energy from many of the modes.  Therefore, POD does not separate the spatial modes, as shown in Figure~\ref{fig:pod}.  Since each of our Fourier modes is oscillating at a fixed and distinct frequency, this is ideal for dynamic mode decomposition, which isolates the spatially coherent Fourier modes exactly, as shown in Figure~\ref{fig:dmd}. 

\begin{figure*}
\begin{center}
\begin{overpic}[width=.95\textwidth]{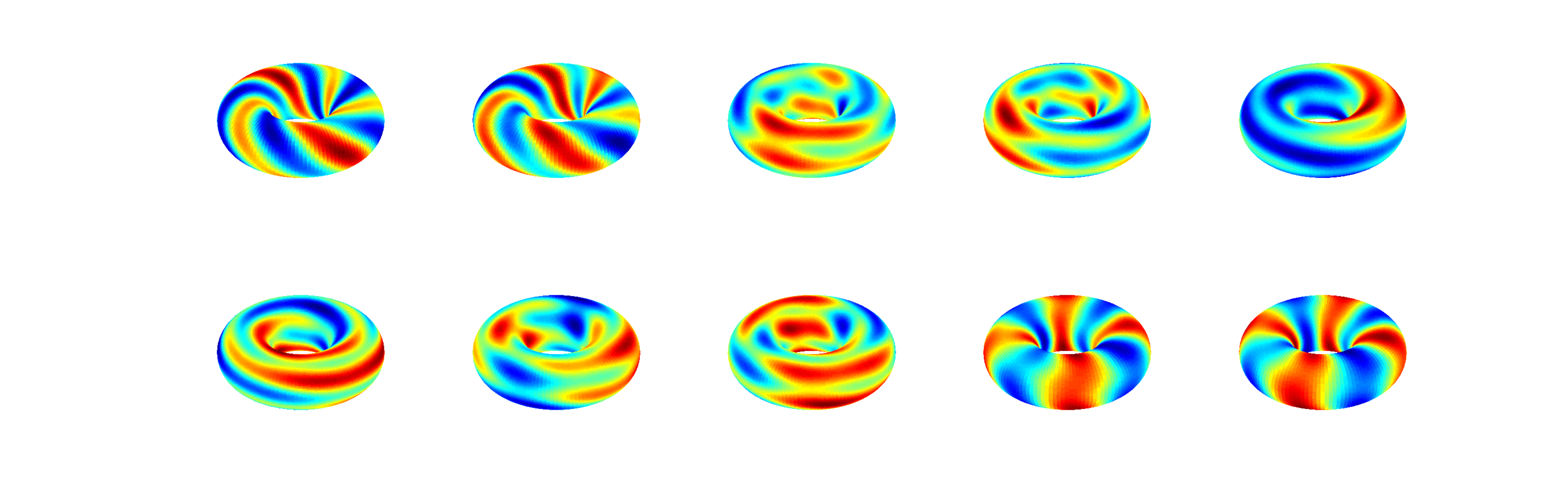}
\put(16,29){Mode 1}
\put(32.5,29){Mode 2}
\put(48.5,29){Mode 3}
\put(65,29){Mode 4}
\put(81,29){Mode 5}
\put(16,14){Mode 6}
\put(32.5,14){Mode 7}
\put(48.5,14){Mode 8}
\put(65,14){Mode 9}
\put(81,14){Mode 10}
\end{overpic}
\vskip -.2in
\caption{POD modes obtained from data.  Energetic modes mix the underlying Fourier modes.}\label{fig:pod}
\end{center}
\end{figure*}

\begin{figure*}
\begin{center}
\begin{overpic}[width=.95\textwidth]{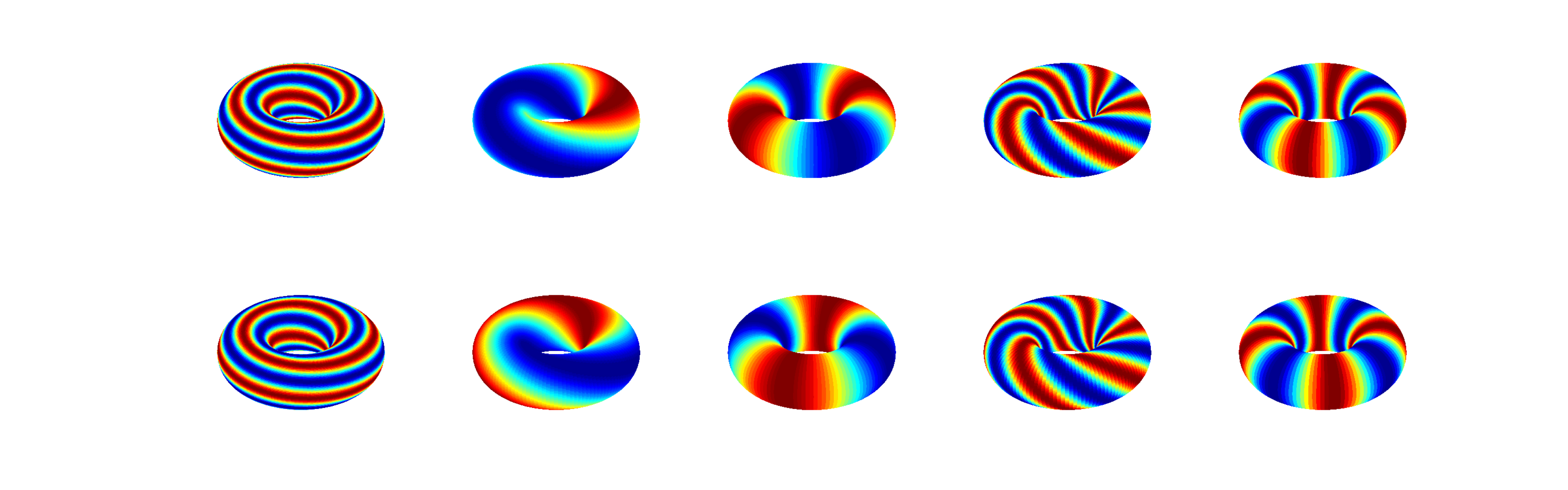}
\put(16,29){Mode 1}
\put(32.5,29){Mode 2}
\put(48.5,29){Mode 3}
\put(65,29){Mode 4}
\put(81,29){Mode 5}
\put(2,24){Real}
\put(2,9){Imaginary}
\end{overpic}
\vskip -.2in
\caption{DMD modes correctly isolate spatially coherent modes.  }\label{fig:dmd}
\end{center}
\end{figure*}

\begin{figure}
\begin{center}
\hspace*{-.25in}
\begin{overpic}[width=.56\textwidth]{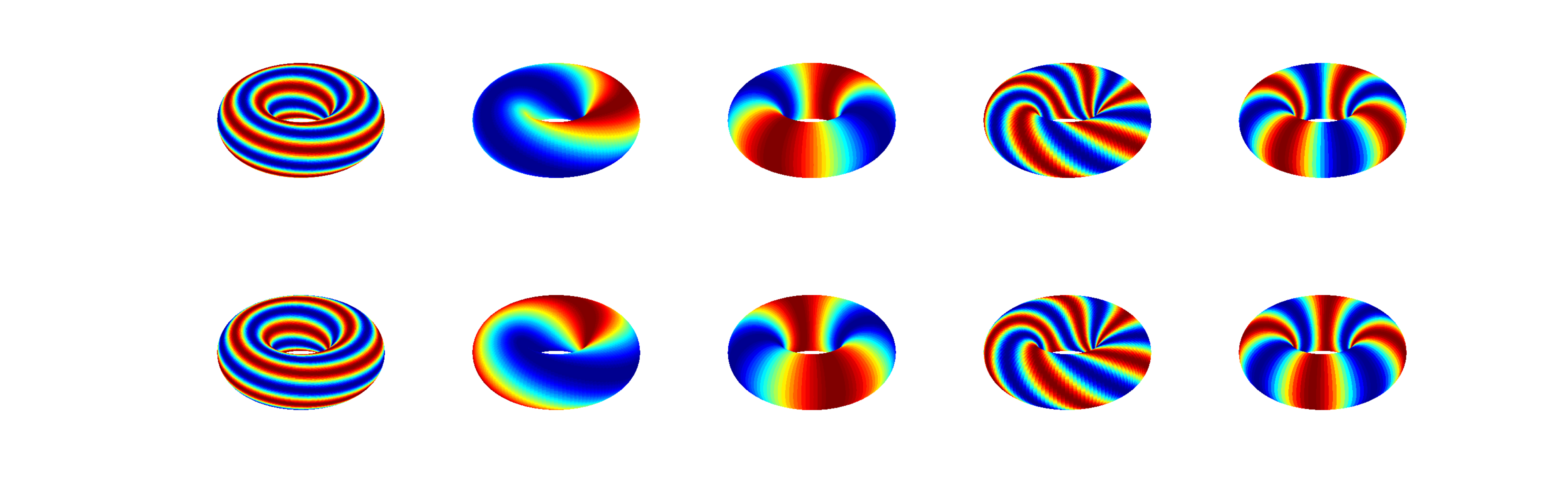}
\small
\put(14,29){Mode 1}
\put(30.5,29){Mode 2}
\put(46.5,29){Mode 3}
\put(63,29){Mode 4}
\put(79,29){Mode 5}
\end{overpic}
\vspace{-.25in}
\caption{DMD modes from compressed data, using ${\bPhi_{\bX}=\bX'\bV\bSigma^{-1}\bW_{\bY}}$.  (Path 1B, Sec.~\ref{sec:pathways})}\label{fig:compresseddmdFFT}
\end{center}
\end{figure}

\begin{figure}
\begin{center}
\hspace*{-.4in}
\begin{overpic}[width=.6\textwidth]{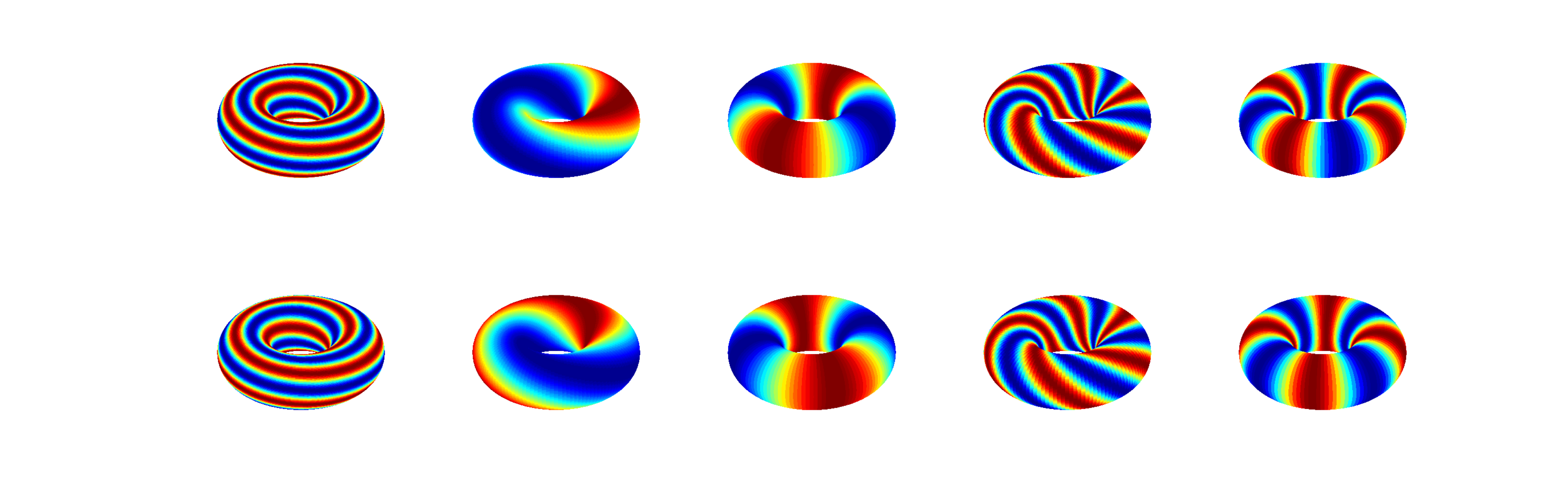}
\small
\put(14,29){Mode 1}
\put(30.5,29){Mode 2}
\put(46.5,29){Mode 3}
\put(63,29){Mode 4}
\put(79,29){Mode 5}
\end{overpic}
\vspace{-.25in}
\caption{Compressive-sampling DMD modes, using matching pursuit.  (Path 2B, Sec.~\ref{sec:pathways})}\label{fig:csdmdFFT}
\vspace{-.2in}
\end{center}
\end{figure}

\begin{figure}
\vspace{-.1in}
\begin{center}
\begin{overpic}[width=.5\textwidth]{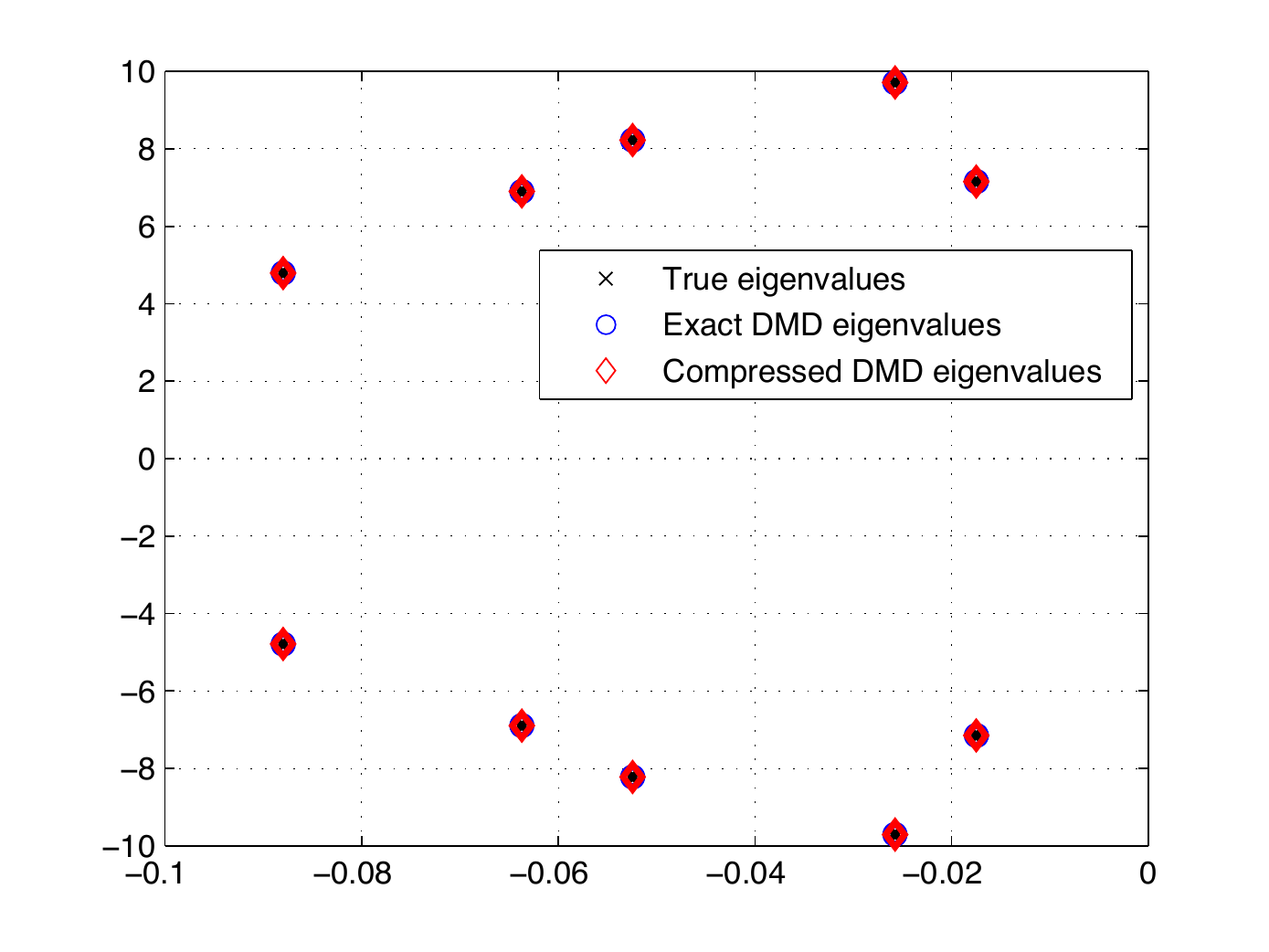}
\put(45,0){Real}
\put(5,30){\begin{sideways}Imaginary\end{sideways}}
\put(15,64){$\mathbb{C}$}
\end{overpic}
\caption{DMD modes captures dynamics faithfully.  }\label{fig:dmdeigs}
\vspace{-.25in}
\end{center}
\end{figure}

In the case of no background noise, the compressive sampling DMD algorithm (Path 2B in Sec.~\ref{sec:pathways}) works extremely well, as seen in the mode reconstruction in Figures~\ref{fig:csdmdFFT}.  Additionally, the method of compressed DMD (Path 1B in Sec.~\ref{sec:pathways}) starting with full-state snapshots, compressing, performing DMD, and then reconstructing using formula in Eq.~(\ref{eq:compressedDMD}) results in accurate reconstruction, shown in Figure~\ref{fig:compresseddmdFFT}.  Both compressive-sampling DMD and compressed DMD match the true eigenvalues nearly exactly, as shown in Figure~\ref{fig:dmdeigs}.

\begin{figure*}
\begin{center}
\begin{tabular}{cc}
\begin{overpic}[width=0.5\textwidth]{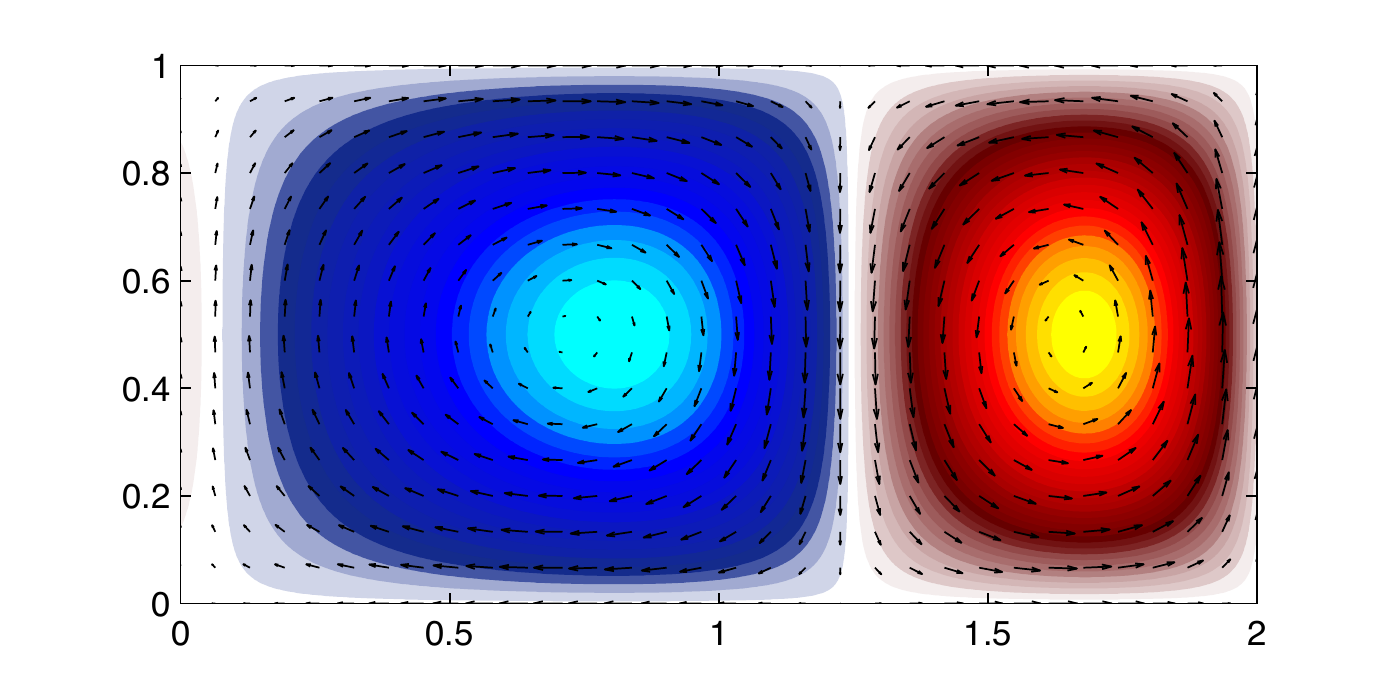}
\put(5,45){(a)}
\end{overpic}
&
\begin{overpic}[width=0.5\textwidth]{f_figure10a}
\put(2,45){(b)}
\end{overpic}\\
\begin{overpic}[width=0.5\textwidth]{f_figure10a}
\put(5,45){(c)}
\end{overpic}
&
\begin{overpic}[width=0.5\textwidth]{f_figure10a}
\put(2,45){(d)}
\end{overpic}
\end{tabular}
\caption{(a)  Double gyre vector field overlaid onto of vorticity for $t=0.2$, $\epsilon=0.25$, $A=0.1$, $\omega = 2\pi/10$.  (b)  Logarithmically scaled Fourier coefficient magnitudes of the vorticity.  (c)  Compressed double gyre vorticity field using $1.0\%$ of the Fourier coefficients.  (d)  Top $1.0\%$ largest magnitude Fourier coefficients.}\label{fig:dg}
\vspace{-.2in}
\end{center}
\end{figure*}

The $\bX,\bX'$ data matrices are obtained on a $128\times 128$ spatial grid from times $0$ to $2$ in increments of $\Delta t = 0.01$, so that $\bX\in\mathbb{R}^{16384\times 200}$.  We use $p=15$ measurements; single-pixel, Gaussian, and Bernouli random measurements all yield correct results.

When we add small to moderate amounts of background noise (2-5\% RMS) in the Fourier domain, a number of things change.  The modes and frequencies are still very well characterized by both methods of compressed DMD and compressive-sampling DMD.  However, the resulting DMD damping is often too large; the disagreement is more significant when the full-state DMD eigenvalues are over-damped compared with exact eigenvalues.  Recent results predict the change in DMD spectrum with small additive noise~\cite{Bagheri:2013b}, and our results appear to be consistent.  More work is needed to characterize and address the issue of noise and DMD.

\subsection{Example 2: Double gyre flow}
The double gyre is a simple two-dimensional model~\cite{Solomon:1988} that is often used to study mixing between ocean basins. The double-gyre flow is given by the following stream-function
\begin{align}
\begin{split}
\psi(x,y,t)&=A\sin\left(\pi f(x,t)\right)\sin(\pi y)\\
f(x,t)&=\epsilon\sin(\omega t)x^2+x-2\epsilon\sin(\omega t) x,
\end{split}
\end{align}
which results in the following time-periodic vector field
\begin{align}
\begin{split}
u&=-\frac{\partial \psi}{\partial y}=-\pi A\sin\left(\pi f(x)\right)\cos(\pi y)\\
v&=\frac{\partial \psi}{\partial x}=\pi A\cos\left(\pi f(x)\right)\sin(\pi y)\frac{df}{dx}
\end{split}
\label{eqn:gyre}
\end{align}
on the closed and bounded domain $[0,2]\times[0,1]$.  Typical parameter values are $A=0.1, \omega=2\pi/10, \epsilon=0.25$.

Figure~\ref{fig:dg} illustrates the double gyre vector field, with color representing vorticity and arrows showing the vector field.  The vorticity field is highly compressible, in that 99\% of the Fourier coefficients may be zeroed with little effect on the reconstructed vorticity field (panel c). 

\begin{figure*}
\vspace{-.1in}
\begin{tabular}{cc}
\begin{overpic}[width=0.5\textwidth]{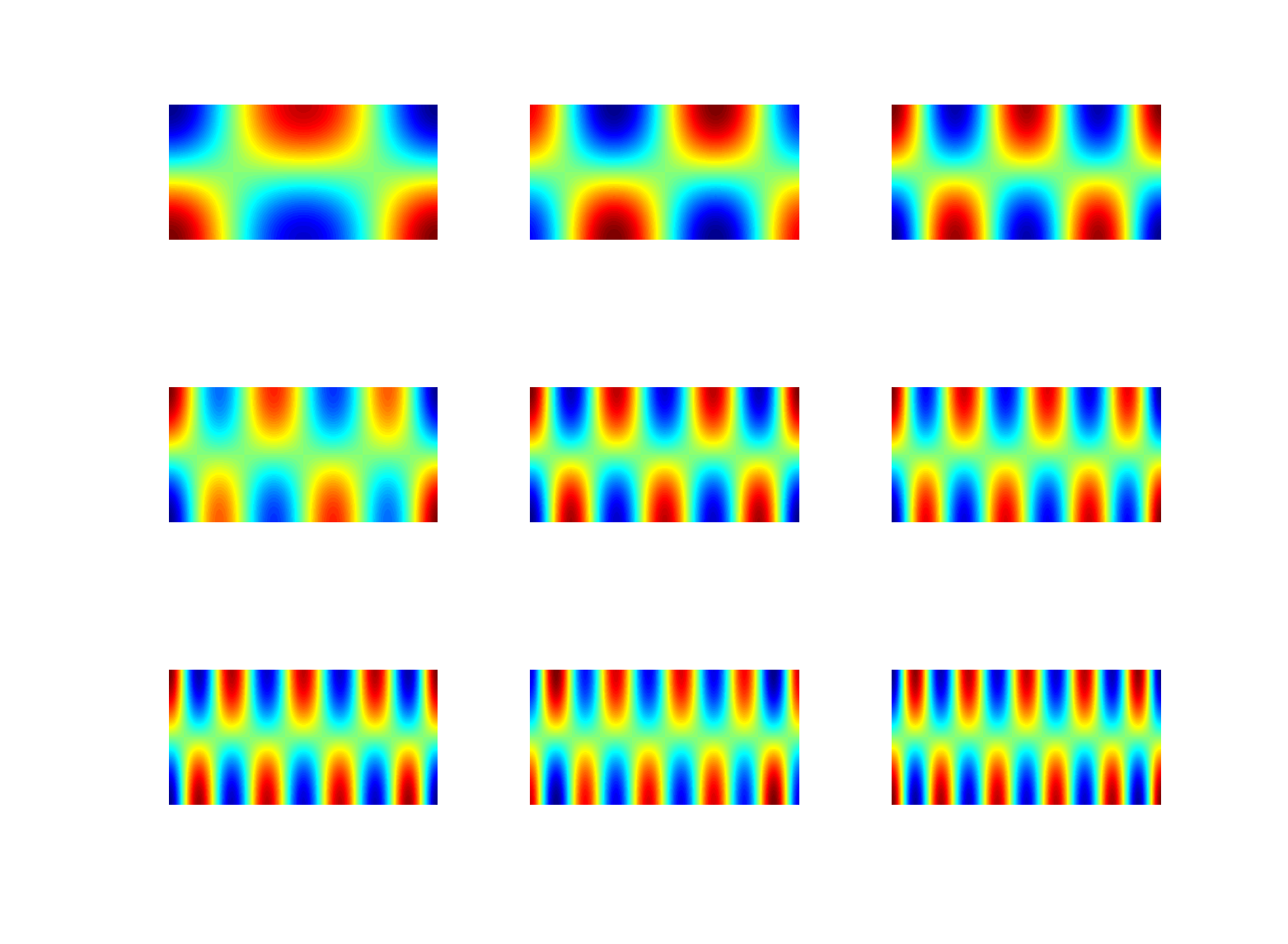}
\put(5,63){(a)}
\small
\put(18,66.5){Mode 1}
\put(46,66.5){Mode 2}
\put(74,66.5){Mode 3}
\put(18,44.5){Mode 4}
\put(46,44.5){Mode 5}
\put(74,44.5){Mode 6}
\put(18,22.5){Mode 7}
\put(46,22.5){Mode 8}
\put(74,22.5){Mode 9}
\end{overpic}
& \hspace{-.4in}
\begin{overpic}[width=0.5\textwidth]{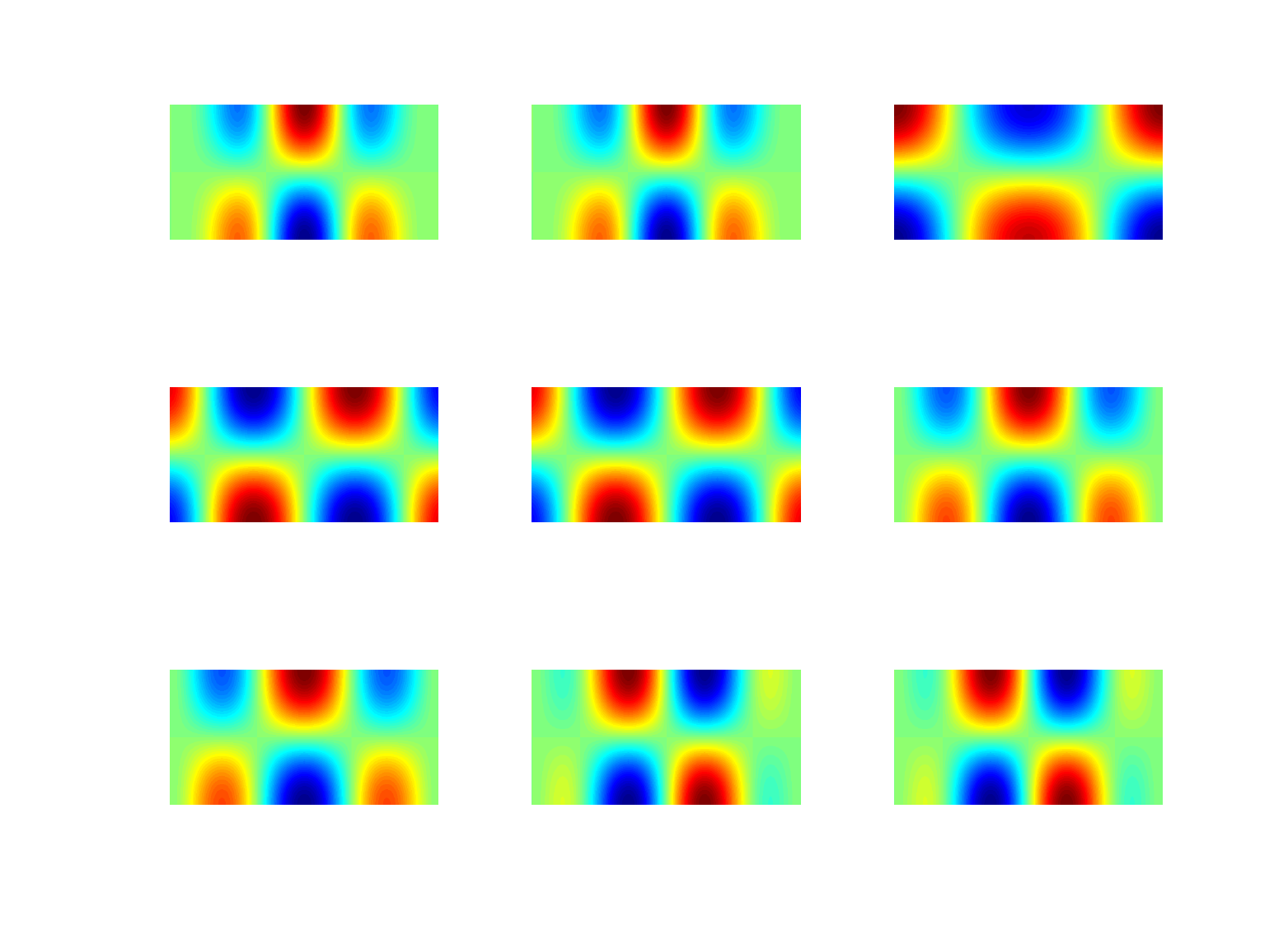}
\put(5,63){(b)}
\small
\put(18,66.5){Mode 1}
\put(46,66.5){Mode 2}
\put(74,66.5){Mode 3}
\put(18,44.5){Mode 4}
\put(46,44.5){Mode 5}
\put(74,44.5){Mode 6}
\put(18,22.5){Mode 7}
\put(46,22.5){Mode 8}
\put(74,22.5){Mode 9}
\end{overpic}
\vspace{-.3in}
\\
\begin{overpic}[width=0.5\textwidth]{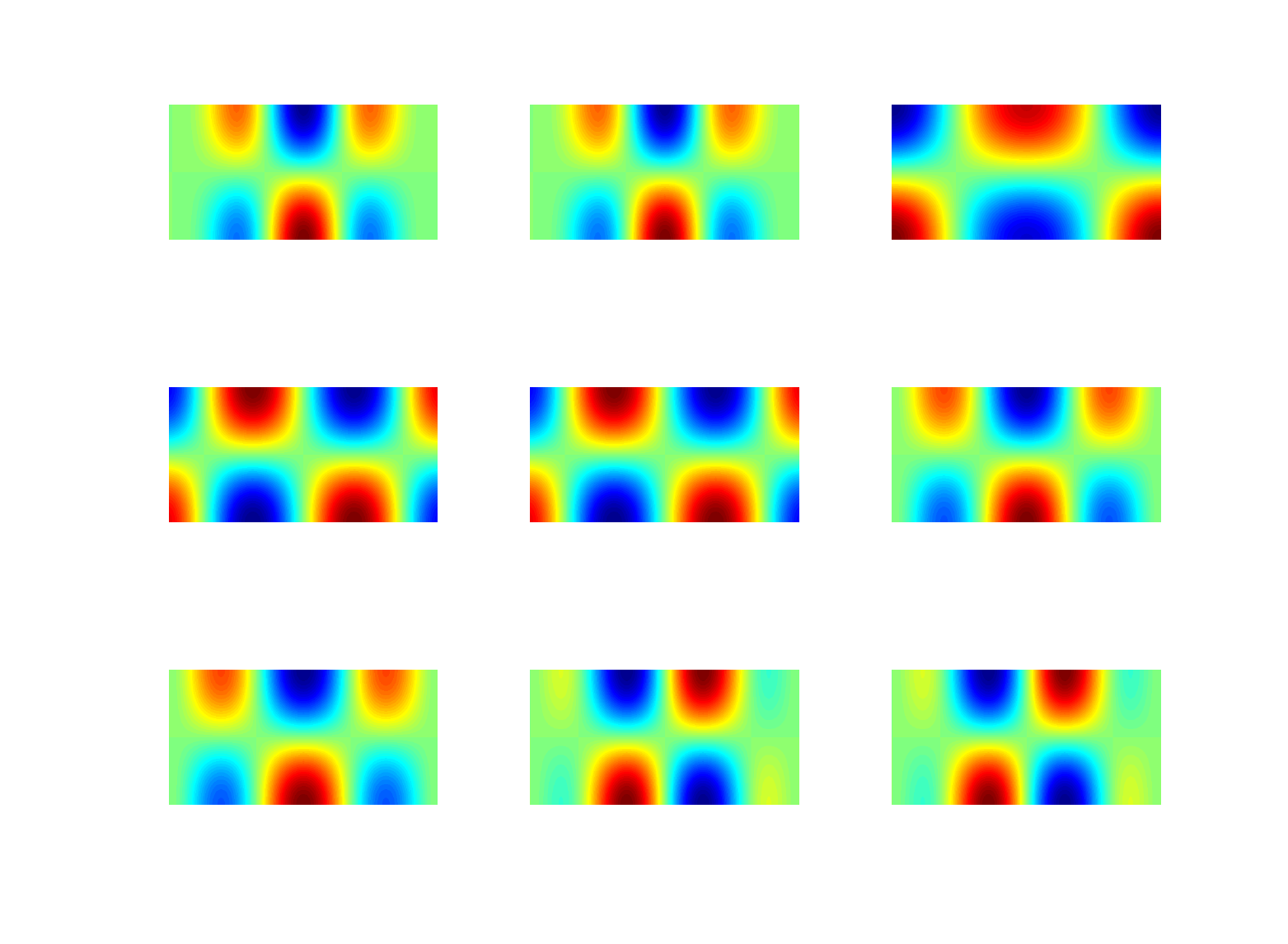}
\put(5,63){(c)}
\small
\put(18,66.5){Mode 1}
\put(46,66.5){Mode 2}
\put(74,66.5){Mode 3}
\put(18,44.5){Mode 4}
\put(46,44.5){Mode 5}
\put(74,44.5){Mode 6}
\put(18,22.5){Mode 7}
\put(46,22.5){Mode 8}
\put(74,22.5){Mode 9}
\end{overpic}
& \hspace{-.4in}
\begin{overpic}[width=0.5\textwidth]{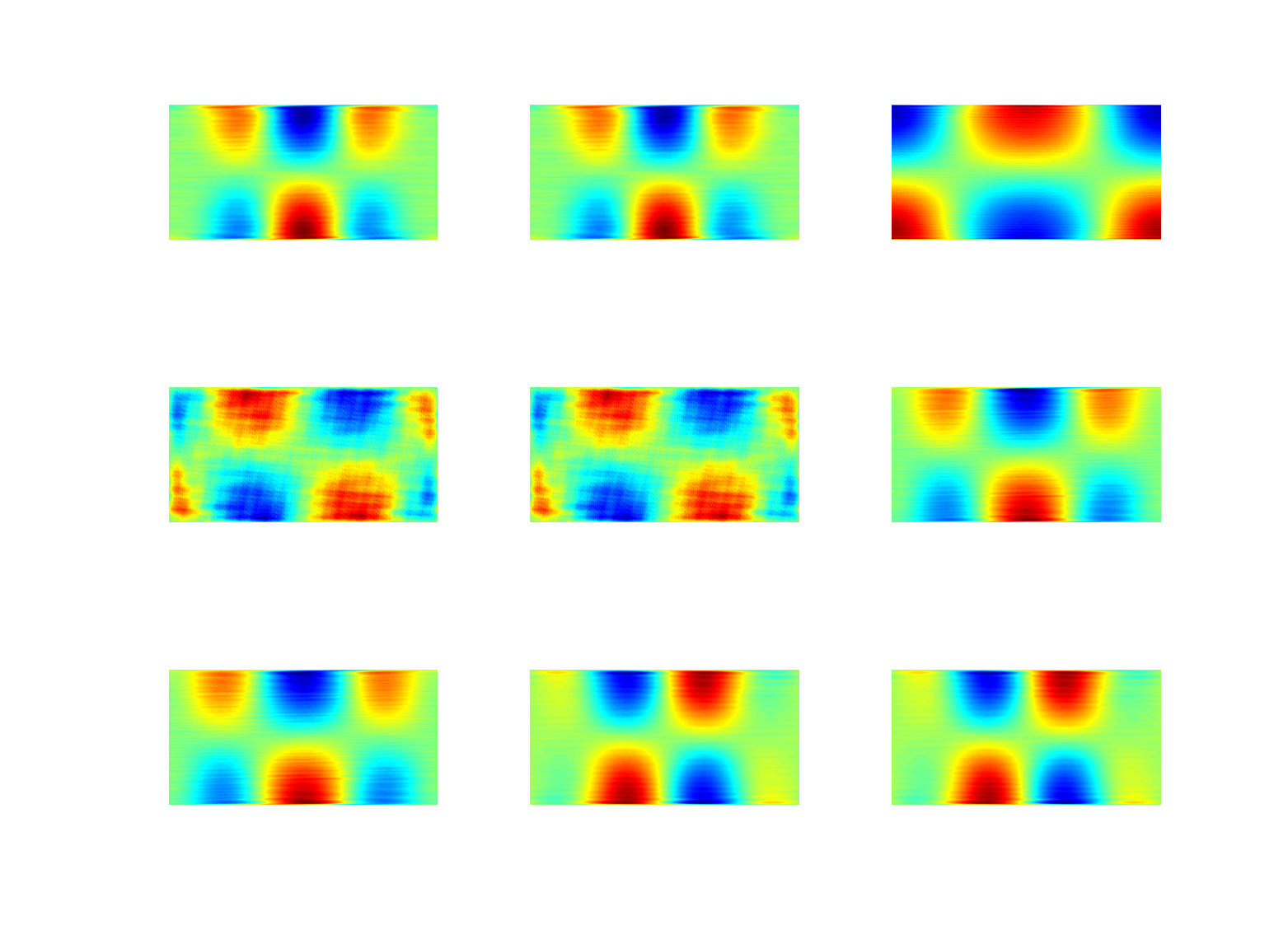}
\put(5,63){(d)}
\small
\put(18,66.5){Mode 1}
\put(46,66.5){Mode 2}
\put(74,66.5){Mode 3}
\put(18,44.5){Mode 4}
\put(46,44.5){Mode 5}
\put(74,44.5){Mode 6}
\put(18,22.5){Mode 7}
\put(46,22.5){Mode 8}
\put(74,22.5){Mode 9}
\end{overpic}
\end{tabular}
\vspace{-.2in}
\caption{Modal decompositions for the double gyre: (a) POD modes, (b) DMD modes, (c) compressed DMD modes (Path 1B, Sec.~\ref{sec:pathways}), and (d) compressive-sampling DMD modes using matching pursuit (Path 2B, Sec.~\ref{sec:pathways}).}\label{fig:dgDMD}
\end{figure*}

The datasets $\bX,\bX'$ are constructed for a $512\times 256$ spatial grid at times $0$ to $15$ with $\Delta t=0.1$.  The POD modes for the double gyre data set are shown in Figure~\ref{fig:dgDMD} (a).  The DMD modes are shown in Figure~\ref{fig:dgDMD} (b).  Taking 2500 single pixel measurements, which accounts for under 2\% of the total pixels, the reconstructed modes are shown in Figure~\ref{fig:dgDMD} (c) and (d).  The modes exhibit good agreement.  Moreover, the DMD eigenvalues are exactly recovered by compressive DMD, shown in Figure~\ref{fig:DGdmdeigs}.  

\begin{figure}
\begin{center}
\begin{overpic}[width=.45\textwidth]{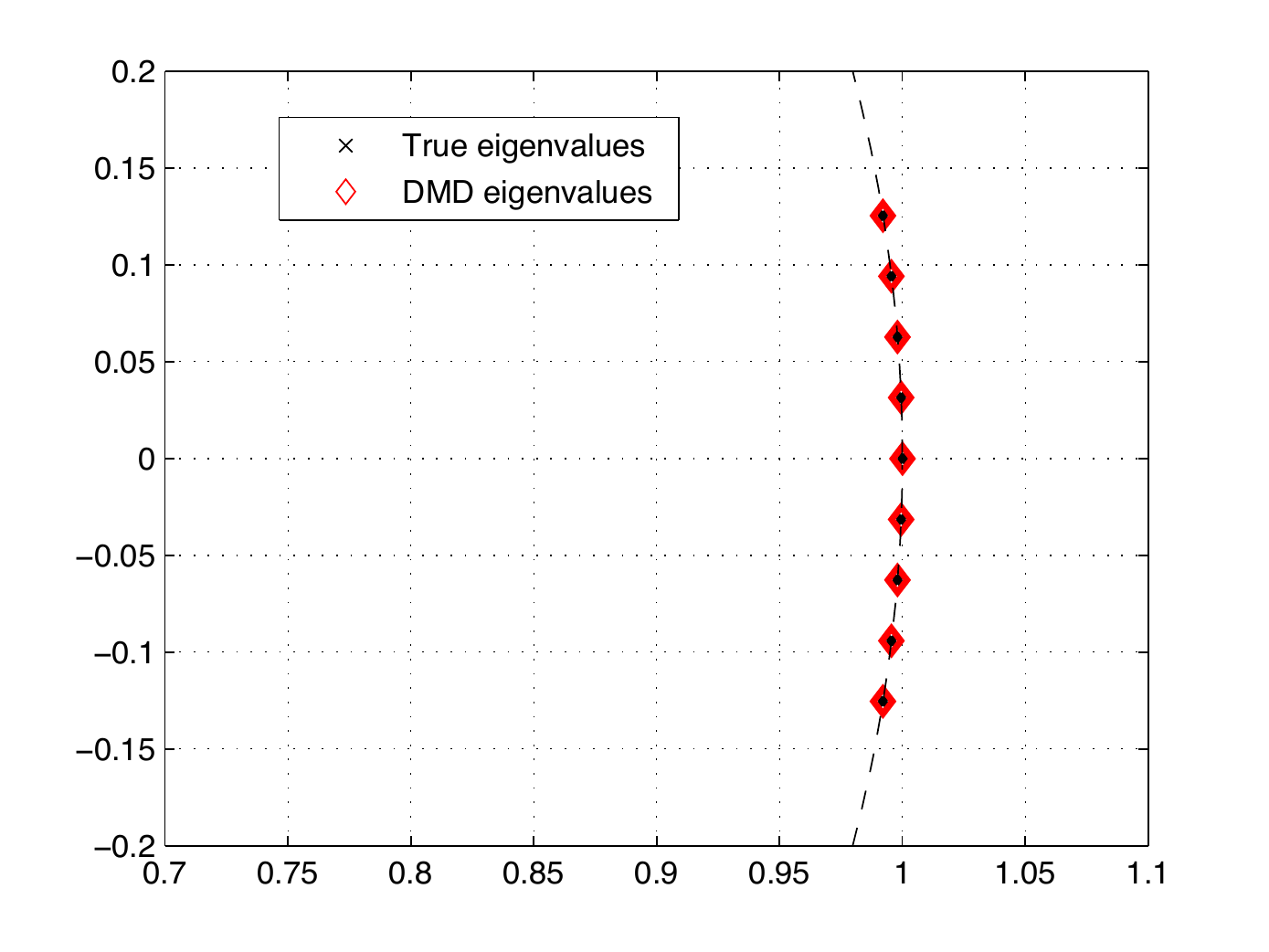}
\put(45,0){Real}
\put(1,30){\begin{sideways}Imaginary\end{sideways}}
\put(15,64){$\mathbb{C}$}
\end{overpic}
\vspace{0.15in}
\caption{DMD modes captures dynamics faithfully.  }\label{fig:DGdmdeigs}
\vspace{-.35in}
\end{center}
\end{figure}

\section{Discussion}\label{sec:discussion}
In this work we have developed a method that leverages compressive sampling to compute the dynamic mode decomposition (DMD) for spatially subsampled or projected data.  There are two key advances resulting from this work.  First, it is possible to reconstruct full-state DMD modes from heavily subsampled output-projected data using compressive sampling.  This is called \emph{compressive-sampling DMD}, and is illustrated in Sec.~\ref{sec:pathways} and Figure~\ref{fig:schematic} as Path 2B.  Second, if full-state snapshots are available, it is possible to first compress the data, perform DMD, and then reconstruct by taking a linear combination of the snapshot data, determined by the projected DMD.  This is denoted by \emph{compressed DMD}, and is explained in Path 1B in Sec.~\ref{sec:pathways} and Figure~\ref{fig:schematic}.  The theory relies on relationships between full-state and projected DMD established in Sec.~\ref{sec:csdmd:1}.  We also show that DMD is invariant to left and right unitary transformations.  We then relax this condition and use the restricted isometry principle that is satisfied when incoherent measurements are applied to a signal that is sparse in some basis.   

Both of these methods are demonstrated to be effective on two example problems with relevance to fluid dynamics and oceanographic/atmospheric sciences.  In the first example, a low-order linear dynamical system is evolved on a few Fourier coefficients, establishing a high-dimensional, time-varying spatial flow.  In the second example, we consider the double gyre flow, which is a model for ocean mixing.  Both examples demonstrate efficient, accurate reconstruction of the DMD eigenvalues and modes from few spatial measurements.  

There are a number of interesting directions that arise from this work.  First, it will be a natural extension to apply these methods to high-dimensional systems in fluid dynamics and to oceanographic/atmospheric measurements.  The reduced burden of spatial sampling may also allow for increased temporal sampling rates in particle image velocimetry (PIV), similar to recent advances in MRI~\cite{Sankaranarayanan:2012,Patel:2013,Shi:2014}.  In PIV, the data transfer from camera to RAM is the limiting factor, although one may envision compressing the full-state data before transferring to memory.  It is also important to characterize and address the effect of noise on DMD, both the full-state and compressed versions~\cite{Bagheri:2013b}.  Finally, it may be possible to combine the spatial compressive sampling strategy advocated here with the temporal sampling strategy in~\cite{Tu:2014b} for greater efficiency gains.

\section*{Appendix: SVD and Unitary transformations}\label{app:svd}
In this appendix we demonstrate that left or right-multiplication of a data matrix $\bX$ by a unitary transformation preserves all of the terms in the singular value decomposition except for the corresponding left or right unitary matrix $\bU$ or $\bV$, respectively.  These matrices are simply multiplied by the new unitary transformation.  

\subsection{Method of snapshots}
Typically when computing the SVD of a large data matrix $\bX\in\mathbb{R}^{n\times m}$, with $n\gg m$, we use the method of snapshots~\cite{Sirovich:1987}.  In this method, we compute the $m\times m$ matrix $\bX^*\bX$ and solve the following eigendecomposition:
\begin{align*}
&\bX^*\bX=\bV\bSigma^2\bV^*\\
\Longrightarrow\quad &\bX^*\bX\bV = \bV\bSigma^2
\end{align*}
With $\bV$ and $\bSigma$ computed, it is possible to construct $\bU$:
\begin{align*}
\bU = \bX\bV\bSigma^{-1}
\end{align*}
Thus, we have the singular value decomposition: ${\bX=\bU_{\bX}\bSigma_{\bX}\bV_{\bX}^*}$.  We have added the subscript $\bX$ to denote that the $\bU,\bSigma$, and $\bV$ are computed from $\bX$.

\subsection{Left unitary transformation}
Now consider the data matrix $\bY=\bC\bX$, where $\bC$ is unitary.  We find that
\begin{align*}
\bY^*\bY&=\bX^*\bC^*\bC\bX=\bX^*\bX,
\end{align*}
so the projected data has the same eigendecompositon and results in the same $\bV_{\bX}$ and $\bSigma_{\bX}$.  Now, constructing $\bU_{\bY}$, we have:
\begin{align*}
\bU_{\bY}=\bY\bV_{\bX}\bSigma_{\bX}^{-1} = \bC\bX \bV_{\bX}\bSigma_{\bX}^{-1}=\bC\bU_{\bX}.
\end{align*}
Therefore, we have the following singular value decomposition: $\bY=\bC\bU_{\bX}\bSigma_{\bX}\bV_{\bX}^*$.  
In other words, $\bC\bU_{\bX}$ are the new left-singular vectors.

\subsection{Right unitary transformation}
Similarly, consider $\bY=\bX\bP^*$, where $\bP$ is a unitary matrix.  We fine
\begin{align*}
\bY^*\bY = \bP\bX^*\bX\bP^* = \bP\bV_{\bX}\bSigma_{\bX}^2\bV_{\bx}^*\bP^*.
\end{align*}
This results in the following eigendecomposition:
\begin{align*}
\bY^*\bY \bP\bV_{\bX} = \bP_{\bX}\bV_{\bX} \bSigma_{\bX}^2.
\end{align*}
Therefore, $\bV_{\bY}=\bP\bV_{\bX}$ and $\bSigma_{\bY}=\bSigma_{\bX}$.  It is then possible to construct $\bU_{\bY}$:
\begin{align*}
\bU_{\bY} = \bY\bP\bV_{\bX}\bSigma_{\bX}^{-1} = \bX\bV_{\bX}\bSigma_{\bX}^{-1} = \bU_{\bX}.
\end{align*}
The singular value decomposition ${\bY=\bU_{\bX}\bSigma_{\bX}\bV_{X}^*\bP^*}$ has $\bP\bV_{\bX}$ as the new right-singular vectors.

\section*{Acknowledgements}
J. N. Kutz acknowledges support from the  U.S. Air Force Office of Scientific Research (FA9550-09-0174).  
J. L. Proctor acknowledges support by Intellectual Ventures Laboratory.  
The authors thank Jonathan Tu for discussion on both sparsity and dynamic mode decomposition.  We also thank Bingni Brunton for discussions on compressive sampling and for suggesting greedy algorithms. 


\begin{spacing}{.89}
\footnotesize{
\bibliographystyle{plain}
\bibliography{csdmd}

\begin{thebibliography}{10}

\bibitem{MarsdenMTAA}
R.~Abraham, J.~E. Marsden, and T.~Ratiu.
\newblock {\em Manifolds, Tensor Analysis, and Applications}, volume~75 of {\em
  Applied Mathematical Sciences}.
\newblock Springer-Verlag, 1988.

\bibitem{Bagheri:2013b}
S.~Bagheri.
\newblock Effects of small noise on the {DMD}/{K}oopman spectrum.
\newblock {\em Bulletin Am.~Phys.~Soc.}, 58(18):H35.0002, p.~230, 2013.

\bibitem{Bagheri:2013}
S.~Bagheri.
\newblock Koopman-mode decomposition of the cylinder wake.
\newblock {\em Journal of Fluid Mechanics}, 726:596--623, 2013.

\bibitem{Glauser:2013}
Z.~Bai, T.~Wimalajeewa, Z.~Berger, G.~Wang, M.~Glauser, and P.~K. Varshney.
\newblock Physics based compressive sensing approach applied to airfoil data
  collection and analysis.
\newblock AIAA Paper 2013-0772, 51st Aerospace Sciences Meeting, January 2013.

\bibitem{Baraniuk:2007}
R.~G. Baraniuk.
\newblock Compressive sensing.
\newblock {\em IEEE Signal Processing Magazine}, 24(4):118--120, 2007.

\bibitem{Baraniuk:2009}
R.~G. Baraniuk, V.~Cevher, M.~F. Duarte, and C.~Hegde.
\newblock Model-based compressive sensing.
\newblock {\em IEEE Transactions on Information Theory}, 56(4):1982--2001,
  2010.

\bibitem{Berkooz:1993}
G.~Berkooz, P.~Holmes, and J.~L. Lumley.
\newblock The proper orthogonal decomposition in the analysis of turbulent
  flows.
\newblock {\em Annual Review of Fluid Mechanics}, 23:539--575, 1993.

\bibitem{Bright:2013}
I.~Bright, G.~Lin, and J.~N. Kutz.
\newblock Compressive sensing and machine learning strategies for
  characterizing the flow around a cylinder with limited pressure measurements.
\newblock {\em Physics of Fluids}, 25:127102--1--127102--15, 2013.

\bibitem{Brunton:2014a}
B.~W. Brunton, S.~L. Brunton, J.~L. Proctor, and J.~N. Kutz.
\newblock Optimal sensor placement and enhanced sparsity for classification.
\newblock {\em submitted for publication}, 2013.

\bibitem{Candes:2006}
E.~J. Cand\`es.
\newblock Compressive sensing.
\newblock {\em Proceedings of the International Congress of Mathematics}, 2006.

\bibitem{Candes:2006a}
E.~J. Cand\`es, J.~Romberg, and T.~Tao.
\newblock Robust uncertainty principles: exact signal reconstruction from
  highly incomplete frequency information.
\newblock {\em IEEE Transactions on Information Theory}, 52(2):489--509, 2006.

\bibitem{Candes:2006c}
E.~J. Cand\`es, J.~Romberg, and T.~Tao.
\newblock Stable signal recovery from incomplete and inaccurate measurements.
\newblock {\em Communications in Pure and Applied Mathematics}, 8(1207--1223),
  59.

\bibitem{Candes:2006b}
E.~J. Cand\`es and T.~Tao.
\newblock Near optimal signal recovery from random projections: Universal
  encoding strategies?
\newblock {\em IEEE Transactions on Information Theory}, 52(12):5406--5425,
  2006.

\bibitem{Chen:2012}
K.~K. Chen, J.~H. Tu, and C.~W. Rowley.
\newblock Variants of dynamic mode decomposition: Boundary condition,
  {K}oopman, and {F}ourier analyses.
\newblock {\em Journal of Nonlinear Science}, 22(6):887--915, 2012.

\bibitem{Donoho:2006}
D.~L. Donoho.
\newblock Compressed sensing.
\newblock {\em IEEE Transactions on Information Theory}, 52(4):1289--1306,
  2006.

\bibitem{Fowler:2009}
J.~E. Fowler.
\newblock Compressive-projection principal component analysis.
\newblock {\em IEEE Transactions on Image Processing}, 18(10):2230--2242, 2009.

\bibitem{Gilbert:2010}
A.~C. Gilbert and P.~Indyk.
\newblock Sparse recovery using sparse matrices.
\newblock {\em Proceedings of the IEEE}, 98(6):937--947, 2010.

\bibitem{Gilbert:2012}
A.~C. Gilbert, J.~Y. Park, and M.~B. Wakin.
\newblock Sketched {SVD}: Recovering spectral features from compressive
  measurements.
\newblock {\em ArXiv e-prints}, 2012.

\bibitem{Grosek:2013}
J.~Gosek and J.~N. Kutz.
\newblock Dynamic mode decomposition for real-time background/foreground
  separation in video.
\newblock {\em submitted for publication}, 2013.

\bibitem{Grilli:2012}
M.~Grilli, P.~J. Schmid, S.~Hickel, and N.~A. Adams.
\newblock Analysis of unsteady behaviour in shockwave turbulent boundary layer
  interaction.
\newblock {\em Journal of Fluid Mechanics}, 700:16--28, 2012.

\bibitem{kalman:1965}
B.~L. Ho and R.~E. Kalman.
\newblock Effective construction of linear state-variable models from
  input/output data.
\newblock In {\em Proceedings of the 3rd Annual Allerton Conference on Circuit
  and System Theory}, pages 449--459, 1965.

\bibitem{HLBR_turb}
P.~J. Holmes, J.~L. Lumley, G.~Berkooz, and C.~W. Rowley.
\newblock {\em Turbulence, coherent structures, dynamical systems and
  symmetry}.
\newblock Cambridge Monographs in Mechanics. Cambridge University Press,
  Cambridge, England, 2nd edition, 2012.

\bibitem{JL:1984}
W.~B Johnson and J.~Lindenstrauss.
\newblock Extensions of lipschitz mappings into a hilbert space.
\newblock {\em Contemporary mathematics}, 26(189-206):1, 1984.

\bibitem{Javanovic:2012}
M.~R. Jovanovi\'{c}, P.~J. Schmid, and J.~W. Nichols.
\newblock Low-rank and sparse dynamic mode decomposition.
\newblock {\em Center for Turbulence Research}, 2012.

\bibitem{ERA:1985}
J.~N. Juang and R.~S. Pappa.
\newblock An eigensystem realization algorithm for modal parameter
  identification and model reduction.
\newblock {\em Journal of Guidance, Control, and Dynamics}, 8(5):620--627,
  1985.

\bibitem{Kevrekidis:2003}
I.~G. Kevrekidis, C.~W. Gear, J.~M. Hyman, P.~G. Kevrekidis, O.~Runborg, and
  C.~Theodoropoulos.
\newblock Equation-free, coarse-grained multiscale computation: Enabling
  microscopic simulators to perform system-level analysis.
\newblock {\em Communications in Mathematical Science}, 1(4):715--762, 2003.

\bibitem{Koopman:1931}
B.~O. Koopman.
\newblock Hamiltonian systems and transformation in hilbert space.
\newblock {\em Proceedings of the National Academy of Sciences},
  17(5):315--318, 1931.

\bibitem{Kutz:2013}
J.~N. Kutz.
\newblock {\em Data-Driven Modeling \& Scientific Computation: Methods for
  Complex Systems \& Big Data}.
\newblock Oxford University Press, 2013.

\bibitem{Lumley:1970}
J.~L. Lumley.
\newblock {\em Stochastic Tools in Turbulence}.
\newblock Academic Press, 1970.

\bibitem{ERA:2009}
Z.~Ma, S.~Ahuja, and C.~W. Rowley.
\newblock Reduced order models for control of fluids using the eigensystem
  realization algorithm.
\newblock {\em Theoretical and Computational Fluid Dynamics}, 25(1):233--247,
  2011.

\bibitem{Mezic:2013}
I.~Mezi\'c.
\newblock Analysis of fluid flows via spectral properties of the {K}oopman
  operator.
\newblock {\em Annual Review of Fluid Mechanics}, 45:357--378, 2013.

\bibitem{Needell:2010}
D.~Needell and J.~A. Tropp.
\newblock {CoSaMP}: iterative signal recovery from incomplete and inaccurate
  samples.
\newblock {\em Communications of the ACM}, 53(12):93--100, 2010.

\bibitem{noack:03cyl}
B.~R. Noack, K.~Afanasiev, M.~Morzynski, G.~Tadmor, and F.~Thiele.
\newblock A hierarchy of low-dimensional models for the transient and
  post-transient cylinder wake.
\newblock {\em Journal of Fluid Mechanics}, 497:335--363, 2003.

\bibitem{Nyquist:1928}
H.~Nyquist.
\newblock Certain topics in telegraph transmission theory.
\newblock {\em Transactions of the A. I. E. E.}, pages 617--644, {FEB} 1928.

\bibitem{Patel:2013}
V.~M. Patel and R.~Chellappa.
\newblock {\em Sparse representations and compressive sensing for imaging and
  vision}.
\newblock Briefs in Electrical and Computer Engineering. Springer, 2013.

\bibitem{penlandMWR89}
C.~Penland.
\newblock Random forcing and forecasting using {P}rincipal {O}scillation
  {P}attern analysis.
\newblock {\em Mon.\ Weather Rev.}, 117(10):2165--2185, October 1989.

\bibitem{penlandJClimate93}
C.~Penland and T.~Magorian.
\newblock Prediction of {N}i\~{n}o 3 sea-surface temperatures using linear
  inverse modeling.
\newblock {\em J.\ Climate}, 6(6):1067--1076, June 1993.

\bibitem{Qi:2012}
H.~Qi and S.~M. Hughes.
\newblock Invariance of principal components under low-dimensional random
  projection of the data.
\newblock IEEE International Conference on Image Processing, October 2012.

\bibitem{Rowley:2009}
C.~W. Rowley, I.~Mezi\'c, S.~Bagheri, P.~Schlatter, and D.~S. Henningson.
\newblock Spectral analysis of nonlinear flows.
\newblock {\em Journal of Fluid Mechanics}, 641:115--127, 2009.

\bibitem{Sankaranarayanan:2012}
A.~C. Sankaranarayanan, P.~K. Turaga, R.~G. Baraniuk, and R.~Chellappa.
\newblock Compressive acquisition of dynamic scences.
\newblock In {\em Computer Vision--ECCV}, pages 129--142, 2010.

\bibitem{Schaeffer:2013}
H.~Schaeffer, R.~Caflisch, C.~D. Hauck, and S.~Osher.
\newblock Sparse dynamics for partial differential equations.
\newblock {\em Proceedings of the National Academy of Sciences USA},
  110(17):6634--6639, 2013.

\bibitem{schmid:2010}
P.~J. Schmid.
\newblock Dynamic mode decomposition of numerical and experimental data.
\newblock {\em Journal of Fluid Mechanics}, 656:5--28, August 2010.

\bibitem{Schmid:2011}
P.~J. Schmid.
\newblock Application of the dynamic mode decomposition to experimental data.
\newblock {\em Experiments in Fluids}, 50:1123--1130, 2011.

\bibitem{Schmid:2012}
P.~J. Schmid, D.~Violato, and F.~Scarano.
\newblock Decomposition of time-resolved tomographic {PIV}.
\newblock {\em Experiments in Fluids}, 52:1567--1579, 2012.

\bibitem{Shannon:1948}
C.~E. Shannon.
\newblock A mathematical theory of communication.
\newblock {\em Bell System Technical Journal}, 27(3):379--423, 1948.

\bibitem{Shi:2014}
J.~V. Shi, W.~Yin, A.~C. Sankaranarayanan, and R.~G. Baraniuk.
\newblock Video compressive sensing for dynamic {MRI}.
\newblock {\em submitted for publication}, 2013.

\bibitem{Sirovich:1987}
L.~Sirovich.
\newblock Turbulence and the dynamics of coherent structures, parts {I-III}.
\newblock {\em Q. Appl. Math.}, XLV(3):561--590, 1987.

\bibitem{Solomon:1988}
T.~H. Solomon and J.~P. Gollub.
\newblock Chaotic particle transport in time-dependent {Rayleigh-B}\'{e}nard
  convection.
\newblock {\em Physical Review A}, 38(12):6280--6286, 1988.

\bibitem{Tropp:2004}
J.~A. Tropp.
\newblock Greed is good: Algorithmic results for sparse approximation.
\newblock {\em IEEE Transactions on Information Theory}, 50(10):2231--2242,
  2004.

\bibitem{Tropp:2010}
J.~A. Tropp, J.~N. Laska, M.~F. Duarte, J.~K. Romberg, and R.~G. Baraniuk.
\newblock Beyond {N}yquist: Efficient sampling of sparse bandlimited signals.
\newblock {\em IEEE Transactions on Information Theory}, 56(1):520--544, 2010.

\bibitem{Tu:2014a}
J.~H. Tu, D.~M. Luchtenburg, C.~W. Rowley, S.~L. Brunton, and J.~N. Kutz.
\newblock On dynamic mode decomposition: theory and applications.
\newblock {\em submitted for publication}, 2013.

\bibitem{Tu:2014b}
J.~H. Tu, C.~W. Rowley, and J.~N. Kutz.
\newblock Spectral analysis of fluid flows using sub-{N}yquist rate {PIV} data.
\newblock {\em submitted for publication}, 2013.

\end{thebibliography}
}
\end{spacing}
\end{document}